\documentclass{article}
\usepackage[T1]{fontenc}
\usepackage[utf8]{inputenc}
\usepackage{geometry}
\geometry{verbose,margin=1in}
\usepackage{amsmath}
\usepackage{amsthm}
\usepackage[shortlabels]{enumitem}
\usepackage[dvipsnames]{xcolor} 
\usepackage[color=Orange!50!white, textwidth=22mm]{todonotes}
\usepackage[unicode=true, bookmarks=false, breaklinks=false,
 pdfborder={0 0 1},
 colorlinks=false,
 hidelinks=true]
 {hyperref}
\usepackage[noabbrev,capitalize,nameinlink]{cleveref}
\usepackage{comment}
\makeatletter
\theoremstyle{plain}
\newtheorem{thm}{\protect\theoremname}
\theoremstyle{plain}
\newtheorem{lem}[thm]{\protect\lemmaname}

\theoremstyle{plain}
\newtheorem{claim}[thm]{\protect\claimname}
\theoremstyle{plain}

\theoremstyle{plain}

\theoremstyle{plain}
\newtheorem{definition}[thm]{\protect\definitionname}
\theoremstyle{conjecture}

\newcommand{\Gnp}{G(n,p)}

\usepackage{amssymb}

\makeatother

\providecommand{\claimname}{Claim}
\providecommand{\corollaryname}{Corollary}
\providecommand{\lemmaname}{Lemma}
\providecommand{\theoremname}{Theorem}
\providecommand{\problemname}{Problem}
\providecommand{\definitionname}{Definition}

\newcommand{\eps}{\varepsilon}
\renewcommand{\epsilon}{\varepsilon}

\newcommand{\mc}{\mathcal}

\title{Spread blow-up lemma with an application to perturbed random graphs}
\author{
Rajko Nenadov\thanks{School of Computer Science, University of Auckland, New Zealand. Email: \texttt{rajko.nenadov@auckland.ac.nz}. Research supported by the New Zealand Marsden Fund.} \and
Huy Tuan Pham\thanks{School of Mathematics, Institute for Advanced Study, Princeton, NJ 08540, USA. Email: \texttt{htpham@caltech.edu}.  Research supported by a Clay Research Fellowship.}
}
\date{}

\begin{document}

\maketitle

\begin{abstract}
    Combining ideas of Pham, Sah, Sawhney, and Simkin on spread perfect matchings in super-regular bipartite graphs with an algorithmic blow-up lemma, we prove a spread version of the blow-up lemma. Intuitively, this means that there exists a probability measure over copies of a desired spanning graph $H$ in a given system of super-regular pairs which does not heavily pin down any subset of vertices. This allows one to complement the use of the blow-up lemma with the recently resolved Kahn-Kalai conjecture. As an application, we prove an approximate version of a conjecture of B\"ottcher, Parczyk, Sgueglia, and Skokan on the threshold for appearance of powers of Hamilton cycles in perturbed random graphs.
\end{abstract}

\section{Introduction}

Given a graph $G$ and disjoint vertex subsets $X_1, X_2 \subseteq V(G)$, define the \emph{density} of the pair $(X_1, X_2)$  as
$$
    d_G(X_1,X_2) = \frac{e_G(X_1,X_2)}{|X_1||X_2|},
$$
where $e_G(X_1, X_2)$ denotes the number of edges in the bipartite subgraph induced by $X_1$ and $X_2$. When the graph $G$ is clear from context we may omit the subscript. A pair of disjoint subsets $(A_1, A_2)$ of $V(G)$ is \emph{$\eps$-regular} if for all $X_i\subseteq A_i$ with $|X_i|\ge \eps |A_i|$ we have that
$$
    |d_G(X_1,X_2) - d_G(A_1, A_2)|\le \eps.
$$
We say that a pair $(A_1, A_2)$ is \emph{$(\eps, \delta)$-super-regular} if it is $\eps$-regular, $|A_1| = |A_2| =: N$, and for each $v \in A_i$ we have 
$$
    |N_G(v) \cap A_{3-i}| \ge \delta N.
$$
As the notion of $\eps$-regularity allows for the existence of even isolated vertices, in some applications the stronger notion of $(\eps, \delta)$-super-regularity is required. We make the additional assumption that both sets are of the same size for convenience.

Szemer\'edi's regularity lemma states that any $n$-vertex graph $G$ can be partitioned into a constant number of parts $V(G) = V_0 \cup V_1 \cup \ldots \cup V_k$, such that $|V_0| \le \eps n$, $|V_1| = \ldots = |V_k|$, and for all but at most $\eps t^2$ pairs of indices $1 \le i, j \le k$, the pair $(V_i, V_j)$ is $\eps$-regular. The number of parts $k$ depends only on the parameter $\varepsilon$. For a thorough introduction to the regularity method, see the survery by Koml\'os and Simonovits \cite{komlos96survey}. 

A typical application of the regularity method involves first applying the regularity lemma, and then a counting or an embedding lemma to conclude that there are many copies of a desired graph. A deep result by Koml\'os, Sarkozy, and Szemer\'edi \cite{komlos97originalblow} states that super-regular pairs behave like complete bipartite graphs from the point of view of containing a spanning graph with constant maximum degree. This result, known as the \emph{blow-up lemma}, was instrumental in a number of breakthroughs such as resolution of the Alon-Yuster conjecture  \cite{komlos01alonyuster}, the P\'osa-Seymour conjecture \cite{komlos98seymour}, and the Bollob\'as-Koml\'os bandwidth conjecture \cite{bottcher09bandwidth}, to name a few. 

Our main contribution, Lemma \ref{lem:spread}, is a version of the blow-up lemma suitable for applications which combine dense graphs and random graphs, such as when one is interested in random subgraphs of graphs with large minimum degree \cite{ABCDJMRS22,PSSS} or perturbed random graphs. The following definition was introduced by the second author, Sah, Sawhney, and Simkin \cite{PSSS}.

\begin{definition} 
    Let $X$ and $Y$ be finite sets and let $\lambda$ be a distribution over injections $\phi : X \to Y$. For $q \in [0, 1]$, we say that $\lambda$ is \emph{$q$-vertex-spread} if for every two sequences of distinct elements $x_1, \ldots, x_k \in X$ and $y_1, \ldots, y_k \in Y$:
    $$
        \Pr\left[ \bigwedge_{i = 1}^k \phi(x_i) = y_i \right] \le q^k.
    $$
\end{definition}

\begin{lem}[Spread Blow-up Lemma]\label{lem:spread}
    For every $r, \Delta \in \mathbb{N}$ and $\delta, \alpha > 0$ there exist $\eps, \beta > 0$ such that the following holds:
    \begin{itemize}
        \item Let $R$ be a graph on the vertex set $[r]$, and for each $i\in [r]$ let $V_i$ be a distinct set of size $N$. Let $G$ be a graph on $V = V_1 \cup \ldots \cup V_r$ such that the pair $(V_i, V_j)$ is $(\eps, \delta)$-super-regular for each $\{i, j\}\in R$.
    
        \item Let $H$ be a graph with maximum degree $\Delta$ and  $h \colon H \to R$ a homomorphism such that $|h^{-1}(i)| \le N$ for every $i \in [r]$. Suppose we are also given a set $W \subseteq V(H)$ of size $|W| \le \beta N$, and for each vertex $x \in W$ a set $W_x \subseteq V_{h(v)}$ of size at least $\alpha N$. For $x \in V(H) \setminus W$, set $W_x = V_{h(x)}$.
    \end{itemize}
   Then there exists an $O(1/N)$-vertex-spread distribution $\lambda$ over embeddings $\phi \colon H \hookrightarrow G$ with the property that $\phi(x) \in W_x$ for every $x \in V(H)$.
\end{lem}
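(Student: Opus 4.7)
The plan is to mimic the algorithmic blow-up lemma of Koml\'os-S\'ark\"ozy-Szemer\'edi by processing the vertex classes $V_1, \ldots, V_r$ sequentially, at each step using a spread random perfect matching in a suitable candidate bipartite graph. The existence of such a matching is guaranteed by the spread perfect matching theorem of Pham-Sah-Sawhney-Simkin~\cite{PSSS} applied to $(\eps', \delta')$-super-regular bipartite pairs. The final embedding $\phi$ is the composition of these random matchings, and the spread property is then obtained by conditioning.

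\textbf{Iterative step.} Choose parameters with $\eps \ll \beta \ll \delta^{\Delta}\alpha/r$. Within each $V_i$, reserve a buffer $\wt V_i \subseteq V_i$ of size $\beta N$ to be filled in a cleanup round at the end. Order the classes as $1, \ldots, r$. Conditional on the partial random embedding $\phi_{<i}$ of $h^{-1}(1) \cup \cdots \cup h^{-1}(i-1)$ produced in earlier steps, define the candidate bipartite graph $B_i$ on parts $A_i := h^{-1}(i)$ and $B_i^{\circ} := V_i \setminus \wt V_i$, with $x \in A_i$ adjacent to $v \in B_i^{\circ}$ iff
\[
    v \in W_x \cap \bigcap_{\substack{y \in N_H(x) \\ h(y) < i}} N_G\!\bigl(\phi_{<i}(y)\bigr).
\]
Assuming that $B_i$ is $(\eps', \delta')$-super-regular (with $\eps'$ small enough for PSSS to apply), sample a $(C/N)$-vertex-spread perfect matching in $B_i$ via~\cite{PSSS} and use it to extend $\phi_{<i}$ to $\phi_{\le i}$. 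After processing all classes, vertices temporarily set aside (those in $\wt V_i$, or those in $h^{-1}(i)$ that are \emph{atypical} in the sense below) are embedded in a final cleanup round using the same mechanism on the remaining super-regular graph.

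\textbf{Main obstacle: super-regularity of $B_i$.} The technical core is verifying that $B_i$ is super-regular with only mildly worsened parameters, regardless of the previous random choices. Each $x \in A_i$ has at most $\Delta$ already-embedded $H$-neighbors $y_1, \ldots, y_{\ell}$ with $\ell \leq \Delta$, so the candidate set of $x$ in $B_i$ is the intersection of $W_x$ (of size $\geq \alpha N$) with the neighborhoods $N_G(\phi_{<i}(y_j)) \cap V_i$, each of size $\geq \delta N$ by $(\eps, \delta)$-super-regularity of the original pair. By iteratively applying the standard fact that intersecting a regular pair with a linear-sized one-sided subset preserves regularity up to a constant loss in parameters (e.g.\ via the Slicing Lemma), one shows that all but a $\tau$-fraction of $v \in V_i$ have on the order of $\delta^{\Delta}\alpha N$ neighbors in $A_i$ via $B_i$, with $\tau \to 0$ as $\eps \to 0$, and symmetrically for $x \in A_i$. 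The small fraction of atypical vertices is absorbed into $\wt V_i$ (or reserved for the cleanup round on the $A_i$ side). Choosing $\eps$ sufficiently small (depending on $r, \Delta, \delta, \alpha$) makes the constant losses tolerable across all $O(r)$ iterations, and choosing $\beta$ small ensures that the buffers are large enough to accommodate all atypical vertices while keeping the cleanup graph super-regular.

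\textbf{Spread property.} For any pinnings $(x_1, y_1), \ldots, (x_k, y_k)$ with distinct $x_j$'s and distinct $y_j$'s, group them by class: let $k_i = |\{j : h(x_j) = i\}|$. By the chain rule,
\[
    \Pr\bigl[\phi(x_j) = y_j \text{ for all } j\bigr] \;=\; \prod_{i=1}^{r} \Pr\bigl[\phi(x_j) = y_j \text{ for all } j \text{ with } h(x_j) = i \,\big|\, \phi_{<i}\bigr].
\]
Each factor is the probability that the $(C/N)$-spread matching at step $i$ pins down $k_i$ specified values, which is at most $(C/N)^{k_i}$. The product is $(C/N)^k$; since $C$ depends only on $r, \Delta, \delta, \alpha$, this yields the desired $O(1/N)$-spread bound, with the cleanup round contributing only a bounded multiplicative constant.
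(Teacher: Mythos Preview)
Your proposal has a genuine gap in the verification that the candidate graph $B_i$ is $(\eps',\delta')$-super-regular. First, the degree of $x\in A_i$ in $B_i$ is roughly $d^{\ell_x}N$, where $\ell_x=|\{y\in N_H(x):h(y)<i\}|$ is the number of $H$-neighbours of $x$ in earlier classes; since $\ell_x$ can range over $\{0,\ldots,\Delta\}$ for a positive fraction of the $x$'s, the $A_i$-side degrees may differ by a factor of $d^{\Delta}$, which already rules out super-regularity (and the vertices in $W$ introduce a further factor-$\alpha$ discrepancy). Second, and more fundamentally, even among $x$ with a fixed $\ell_x$, your Slicing Lemma argument is misapplied: the Slicing Lemma concerns subsets of an existing regular pair in $G$, whereas $B_i$ is a bipartite graph whose left side consists of $H$-vertices. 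All the Slicing Lemma gives you is that each individual set $C_\phi(x)\subseteq V_i$ is large. Regularity of $B_i$ is equivalent, via a Chung--Graham--Wilson type counting lemma, to a second-moment bound on the intersections $|C_\phi(x)\cap C_\phi(x')|$, and this depends on \emph{how} the earlier classes were embedded. A black-box spread matching from PSSS at step $i-1$ gives no such guarantee about the candidate sets at step $i$.

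The paper handles exactly this by embedding vertices one at a time rather than class by class, at each step choosing the image uniformly at random from a large set $A_{j+1}$ of ``good'' candidates---those $v$ for which the extended partial embedding remains \emph{quasirandom}, meaning both that all remaining candidate sets stay large (property (P1)) and that all but $O(\eps' j N)$ pairs of candidate sets have intersection close to the expected value (property (P2)). Maintaining (P2) is what makes the eventual bipartite matching graph $\eps''$-regular. PSSS is then invoked only once, at the very end, on a buffer set $B$ engineered so that every $b\in B$ has exactly $\Delta$ neighbours already embedded; thus the degree-variation issue disappears as well. The $O(1/N)$-spread comes from the lower bound $|A_{j+1}|\gg \delta_2 N$ at each step together with the spread of the single PSSS matching, not from a product of $r$ spread matchings.
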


The proof of Lemma \ref{lem:spread} follows from two ingredients: the algorithmic proof of the blow-up lemma by Koml\'os, S\'ark\"ozy, and Szemer\'edi \cite{KSS}, and a recent result of the second author, Sah, Sawhney, and Simkin on spread perfect matchings in super-regular bipartite graphs \cite{PSSS}. 

Together with the resolved Kahn-Kalai conjecture \cite{park24kahnkalai} or its fractional version \cite{frankston21fractional}, Lemma \ref{lem:spread} allows one to obtain probabilistic threshold results, such as results on robust thresholds in random subgraphs of graphs with large minimum degree as considered in \cite{PSSS}. In this paper, we demonstrate an application of Lemma \ref{lem:spread} to perturbed random graphs, a model introduced by Bohman, Frieze and Martin \cite{bohman2003many}. In the perturbed random graph model, one is given an arbitrary graph $G$ on $n$ vertices satisfying a minimum degree condition, and would like to determine the threshold $p$ at which $G\cup \Gnp$ contains certain structure. In particular, we verify an approximate version of the conjecture of B\"ottcher, Parczyk, Sgueglia, and Skokan \cite{bottcher24square} on the threshold for appearance of powers of Hamiltonian cycles in the perturbed random graph model.

\begin{thm} \label{thm:power}
     For every integer $k \ge 3$ and $\alpha > 0$, there exists $C$ and $n_0$ such that the following holds for $n \ge n_0$. Let $G$ be a graph with $n$ vertices and minimum degree at least $(1/(k+1) + \alpha)n$. Then $G \cup \Gnp$ contains the $k$-th power of a Hamilton cycle with probability at least $1/2$ for $p \ge C n^{-1/(k-1)}$. 
\end{thm}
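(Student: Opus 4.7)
The plan is to combine Lemma~\ref{lem:spread} with the fractional Kahn-Kalai theorem of Frankston, Kahn, Narayanan, and Park~\cite{frankston21fractional}. The goal is to exhibit a distribution $\mu$ over copies of $C_n^k$ (the $k$-th power of a Hamilton cycle) in $G \cup K_n$ whose restriction to edges of $K_n \setminus E(G)$ is $O(n^{-1/(k-1)})$-spread on $\binom{V(G)}{2}$. Kahn-Kalai will then supply these ``new'' edges inside $G(n,p)$ whenever $p\ge Cn^{-1/(k-1)}$, yielding the desired copy of $C_n^k$ in $G \cup G(n,p)$.

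First I would apply the degree form of Szemer\'edi's regularity lemma to $G$ with $\eps \ll \alpha, 1/k$, obtaining a partition $V(G) = V_0 \cup V_1 \cup \dots \cup V_r$ with $|V_0|\le \eps n$, $|V_i| = N := \lfloor (n-|V_0|)/r\rfloor$ for $i \ge 1$, and a reduced graph $R$ on $[r]$ whose edges correspond to $(\eps,\delta)$-super-regular pairs of $G$. After standard cleaning, the hypothesis on $G$ transfers to $\delta(R) \ge (1/(k+1) + \alpha/2)r$.

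The combinatorial heart of the argument is the second step: construct a \emph{template}, meaning a homomorphism $h\colon C_n^k \to R^+$ with $|h^{-1}(i)|\le N$ for each $i$, where $R^+ \supseteq R$ is $R$ augmented by a controlled collection $F$ of ``virtual'' edges. Edges of $C_n^k$ whose image under $h$ lies in $R$ will ultimately be embedded in $G$ through the super-regular structure; those whose image lies in $F$ must be supplied by $G(n,p)$. The condition $\delta(R)\ge (1/(k+1)+\alpha/2)r$ is in exactly the right regime for this construction: it is too weak to build a template without any virtual edges (indeed $R$ need not contain even a $K_{k+1}$), yet strong enough to extract a collection of $(k+1)$-partite gadgets in $R$ that can be stitched into a cyclic backbone for $C_n^k$ by a suitable set $F$, together with an absorbing gadget for a small reservoir of leftover vertices. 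Crucially, $F$ should be chosen so that its preimage is \emph{rigid} inside $C_n^k$: any subcollection of virtual edges pins down a proportional number of vertices of any embedding respecting $h$. This rigidity is precisely what translates the $O(1/N)$-vertex-spread provided by Lemma~\ref{lem:spread} into an $O(n^{-1/(k-1)})$-edge-spread downstream.

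Third, apply Lemma~\ref{lem:spread} to the super-regular system $(V_1,\ldots,V_r)$ with template $h$ restricted to real edges of $R^+$; this yields an $O(1/N)$-vertex-spread distribution $\lambda$ on embeddings $\phi\colon V(C_n^k)\hookrightarrow V(G)$ respecting $h$ and using only $G$-edges for real edges of $C_n^k$. For each such $\phi$ let $S_\phi \subseteq \binom{V(G)}{2}$ denote the image under $\phi$ of the virtual edges of $C_n^k$. Combining the vertex-spread of $\lambda$ with the rigidity property of $F$ yields the $O(n^{-1/(k-1)})$-spread of the distribution $\phi\mapsto S_\phi$, so that the fractional Kahn-Kalai theorem ensures $G(n,p) \supseteq S_\phi$ for some $\phi$ in the support of $\lambda$, with probability at least $1/2$, whenever $p\ge Cn^{-1/(k-1)}$. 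The hardest part I anticipate is Step~2: designing the template so that $F$ is small enough and rigid enough to realise the spread exponent $-1/(k-1)$, matching the conjectured threshold and tracking the sharp min-degree condition $(1/(k+1)+\alpha)n$. This should be carried out by a ``cover plus absorb'' scheme in $R$ in the spirit of classical perturbed Hamilton cycle arguments, but tuned to interface with the spread structure given by Lemma~\ref{lem:spread}.
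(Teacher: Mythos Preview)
Your high-level strategy matches the paper's: regularity lemma, a template homomorphism with real/virtual edges, Lemma~\ref{lem:spread} for the real part, then a Kahn--Kalai type result for the virtual part. Two points, however, need correction.

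First, a genuine gap: the fractional Kahn--Kalai theorem applied to a $q$-spread measure only gives threshold $O(q\log \ell)$, where $\ell$ is the maximum set size in the support---here $\ell=\Theta(n)$---so your argument as written yields $p\ge Cn^{-1/(k-1)}\log n$, not $Cn^{-1/(k-1)}$. The paper avoids this loss by invoking Spiro's refinement (Theorem~\ref{thm:kk-nolog}), which removes the logarithm provided one verifies a multi-scale condition: for each test set $A$ with $h$ edges and each $t\ge hn^{-\gamma}$, $\Pr[|E(H_\phi)\cap A|=t]\le q^t$. Their Lemma~\ref{lem:check-spread-H} establishes exactly this, with $\ell=1/\gamma$ scales; you would need to replace the bare appeal to FKNP by this.

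Second, on your Step~2: rather than ``$(k+1)$-partite gadgets'' stitched by an absorb scheme, the paper's template rests on a star decomposition of $R$. Lemma~\ref{lem:star-part} shows that $\delta(R)\ge(1/(k+1)+\alpha/4)m$ yields a spanning family of vertex-disjoint stars with at most $k$ leaves; after splitting parts, all stars become $K_{1,k}$'s. A $\{0,1\}$-labelling $\xi$ of $[n]$ then marks star-centre versus leaf positions, with consecutive $1$'s at distance $\le k$ and every $1$ seeing a $0$ within distance $k$. Real edges are the $0$--$1$ pairs inside a segment; virtual edges are the rest. The ``rigidity'' you want is then the precise edge-density bound of Claim~\ref{claim:count-edge}: any connected virtual piece on $v\le n/(2km)$ vertices carries at most $(v-1-\gamma)(k-1)$ edges. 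This inequality, summed over components, gives $v-c\ge t/(k-1)+\gamma c$, which is exactly what converts $O(1/n)$-vertex-spread into the required edge bound with enough slack ($n^{-\gamma c}$ per component) to absorb the sum over placements in Lemma~\ref{lem:check-spread-H}. Your phrase ``pins down a proportional number of vertices'' is the right intuition but not sharp enough: the extra $\gamma$ of savings per component is essential.
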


Note that $p = C n^{-1/(k-1)}$ is sufficient for the existence of the $(k-1)$-th power of a Hamilton cycle in $\Gnp$. It was observed in \cite{bottcher24square} that none of the two conditions can be relaxed, and further conjectured that one can take $\alpha = 0$. The reason why Theorem \ref{thm:power} is challenging from the point of view of previously used techniques is that they mainly rely on embedding small graphs using edges from $\Gnp$ and then connecting them into a desired structure using edges of $G$. In the case of $k$-th power of Hamilton cycles for $k\ge 3$, this approach is not feasible. For a thorough discussion, we refer the reader to \cite{bottcher24square}. We believe that a careful analysis of the critical case, as typically seen in usage of the regularity method and blow-up lemma, would allow to remove the additional $\alpha n$. We leave this as an open problem.

\section{Proof of the spread blow-up lemma}

Given a graph $G$, we say that a pair of disjoint subsets of vertices $(A_1, A_2)$ is \emph{$\eps$-super-regular} if it is $\eps$-regular, $|A_1| = |A_2| =: N$, and for every $v \in A_i$ we have
$$
    (d - \eps) N \le |N_G(v) \cap A_{3-i}| \le (d + \eps) N,
$$
where $d = d_G(A_1, A_2)$. We will later on observe that it suffices to prove Lemma \ref{lem:spread} under the stronger assumption that the pairs $(V_i, V_j)$ for $\{i, j\} \in R$ are $\eps$-super-regular, and that all such pairs have the same density.

To prove Lemma \ref{lem:spread}, we define a desired distribution $\lambda$ as an output distribution of the embedding algorithm described in Section \ref{sec:algo}. On a high level, we follow the  algorithm of Koml\'os, S\'ark\"ozy and Szemer\'edi \cite{KSS} to embed most of the graph $H$. In each step of the algorithm we make use of the fact that there are many possible choices for embedding the next vertex, thus choosing one such uniformly at random suffices for obtaining a $O(1/N)$-vertex-spread distribution over embeddings of a large subgraph of $H$. We then finish off by applying the result of the second author, Sah, Sawhney, and Simkin \cite{PSSS} on vertex-spread distributions over perfect matchings in super-regular pairs. 

Throughout this section, we implicitly assume that all considered (partial) embeddings of $H$ respect the restriction $\phi(x) \in W_x$ for every embedded $x \in V(H)$.

\subsection{Preliminaries}

We say that a bipartite graph with parts $X$ and $Y$ is $\eps$-regular if the pair $(X, Y)$ is $\eps$-regular. Analogous definition follows for $\eps$-super-regular bipartite graphs.

In the analysis of the embedding algorithm, we use the bipartite version of the well-known result of Thomason \cite{thomason87pseudor} and Chung, Graham, and Wilson \cite{chung89quasirandom} on quasirandom graphs (e.g.\ see \cite[Theorem 2.1]{gowers06quasirandom}).

\begin{lem} \label{lemma:quasirandom}
    For every $\eps, d_0 > 0$ there exists $\xi > 0$ such that the following holds. If $G$ is a bipartite graph with parts $X$ and $Y$ and density $d \ge d_0$, such that
    $$
        \sum_{x \in X} \sum_{x' \in X} |N_G(x) \cap N_G(x')|^2 \le d^4 |X|^2 |Y|^2 + \xi |X|^2 |Y|^2,
    $$
    then $(X, Y)$ is $\eps$-regular.
\end{lem}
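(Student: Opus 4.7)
The plan is to prove the contrapositive: assuming $(X,Y)$ is not $\eps$-regular, I will exhibit a surplus of at least $\xi|X|^2|Y|^2$ in the sum $\sum_{x, x'}|N(x) \cap N(x')|^2$ above the ``trivial'' lower bound $d^4|X|^2|Y|^2$. The starting observation is that this sum is exactly the number of labeled homomorphic copies of $C_4$ between $X$ and $Y$, namely $\sum_{x,x',y,y'} A_{xy}A_{xy'}A_{x'y}A_{x'y'}$ where $A$ is the bipartite adjacency matrix, and that two cascaded applications of Cauchy–Schwarz (first $\sum_y \deg(y)^2 \ge (d|X||Y|)^2/|Y|$, then $\sum_{x,x'}|N(x)\cap N(x')|^2 \ge (\sum_{x,x'}|N(x)\cap N(x')|)^2/|X|^2$) yield the universal bound $d^4|X|^2|Y|^2$ which equals this minimum for genuinely pseudo-random graphs.

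First, I would pick a witness for the failure of $\eps$-regularity: subsets $X_0 \subseteq X$ and $Y_0 \subseteq Y$ with $|X_0| \ge \eps|X|$, $|Y_0| \ge \eps|Y|$, and $|e(X_0,Y_0) - d|X_0||Y_0|| \ge \eps|X_0||Y_0|$. Define $\psi(x) = |N(x) \cap Y_0| - d|Y_0|$. The defect condition says $|\sum_{x \in X_0}\psi(x)| \ge \eps |X_0||Y_0|$, so a single Cauchy–Schwarz over $X_0$ gives $\sum_{x \in X}\psi(x)^2 \ge \eps^3|X||Y_0|^2$. Expanding the square and using $\sum_x |N(x) \cap Y_0|^2 = \sum_{y,y' \in Y_0}|N(y) \cap N(y')|$, this translates into a $\mathrm{poly}(\eps)$-surplus, of order $\eps^3 |X||Y_0|^2$, for the count of cherries with apex in $X$ and endpoints in $Y_0$, above its ``expected'' value $d^2|X||Y_0|^2$.

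Second, I would transfer this cherry surplus to a 4-cycle surplus. Applying Cauchy–Schwarz once more across the $|Y_0|^2$ pairs in $Y_0 \times Y_0$ converts the excess in $\sum_{y,y' \in Y_0}|N(y) \cap N(y')|$ into an excess in $\sum_{y,y' \in Y_0}|N(y) \cap N(y')|^2$, which by monotonicity extends to $\sum_{y,y' \in Y}|N(y) \cap N(y')|^2$. Since the 4-cycle count is symmetric in the two sides, this equals the hypothesized sum $\sum_{x,x'}|N(x) \cap N(x')|^2$, and I can read off $\xi = \xi(\eps, d_0)$ polynomial in both parameters that contradicts the assumed near-minimality.

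The main obstacle is the second Cauchy–Schwarz step and its interaction with the bookkeeping: the cherry surplus is naturally an \emph{excess above} $d^2|X||Y_0|^2$, but one needs to compare to the true average codegree within $Y_0$, which itself may fluctuate. Careful expansion of $(|N(x) \cap Y_0| - d|Y_0|)^2$ and tracking of cross-terms is needed so that the genuine $\mathrm{poly}(\eps)$ surplus is not swamped by lower-order errors of size $O(|X||Y_0|)$ coming from the degree sum. The density hypothesis $d \ge d_0$ enters precisely here, governing the ratio of cherries to 4-cycles and thus the final polynomial dependence $\xi = \xi(\eps, d_0)$.
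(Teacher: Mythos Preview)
The paper does not prove this lemma at all: it is quoted as the bipartite version of the Thomason / Chung--Graham--Wilson equivalence, with a pointer to \cite{gowers06quasirandom} for a proof. So there is no ``paper's own proof'' to compare against; your sketch is precisely the standard argument from that literature, and in outline it is correct.

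That said, one step in your sketch is not yet justified and your ``main obstacle'' paragraph does not quite address it. After the first Cauchy--Schwarz you obtain (roughly) $\sum_{y,y'\in Y_0}|N(y)\cap N(y')| \ge (d^2+\eps^3)|X||Y_0|^2$, and then a second Cauchy--Schwarz gives $\sum_{y,y'\in Y_0}|N(y)\cap N(y')|^2 \ge (d^2+\eps^3)^2|X|^2|Y_0|^2$. You then invoke monotonicity to pass to the full sum over $Y\times Y$. But the target is $d^4|X|^2|Y|^2 + \xi|X|^2|Y|^2$, and since $|Y_0|$ may be as small as $\eps|Y|$, the bound $(d^2+\eps^3)^2|X|^2|Y_0|^2$ can be far below $d^4|X|^2|Y|^2$; monotonicity alone does not close the gap. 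The issue you flag---fluctuation of the true average codegree within $Y_0$ versus $d^2|X|$---is a different (and genuine) bookkeeping point, but it does not fix this.

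The standard remedy is either (i) to run a \emph{defect} form of Cauchy--Schwarz that simultaneously lower-bounds the contribution from pairs outside $Y_0\times Y_0$ by their own ``trivial'' minimum, so that the surplus on $Y_0\times Y_0$ survives when summed over all of $Y\times Y$; or (ii) to work from the start with balanced indicator functions $1_{X_0}-|X_0|/|X|$ and $1_{Y_0}-|Y_0|/|Y|$ on the full sets $X,Y$, so that two applications of Cauchy--Schwarz over the \emph{full} vertex sets yield the $C_4$ surplus directly (this is the Gowers-norm formulation). Either route completes your argument with $\xi$ polynomial in $\eps$ and $d_0$.
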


We also observe that every $(\eps,\delta)$-super-regular bipartite graph contains a spanning subgraph which is $\eps'$-super-regular with a specified density $\overline{d} \le \delta/2$, where $\eps' = O(\eps^{1/2})$. This is proven in the following lemma.

\begin{lem}\label{lem:super-regular}
    Let $G$ be a bipartite graph on vertex parts $X_1$ and $X_2$ of size $N$. Assume that $G$ is $(\eps,\delta)$-super-regular. Then for any $\overline{d} \le \delta - C\eps$ for a sufficiently large absolute constant $C$, $G$ contains a spanning subgraph with density $\overline{d}$ which is $\eps'$-super-regular for $\eps' = O(\eps^{1/2})$. 
\end{lem}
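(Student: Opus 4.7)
The plan is to construct $G'$ as a random sub-sample of $G$ followed by small local corrections at the few vertices with atypical degree. Let $d = d_G(X_1, X_2) \ge \delta$ be the density of $G$, set $p = \overline{d}/d$, and form $G_1 \subseteq G$ by including each edge of $G$ independently with probability $p$. By $\eps$-regularity of $G$, the set $B$ of \emph{atypical} vertices (those with $\deg_G(v) \notin [(d - 2\eps)N, (d + 2\eps)N]$) satisfies $|B| \le 4\eps N$ in each part, via the usual argument that a larger set of deviant vertices would itself witness a failure of regularity.

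By Chernoff, with high probability $e(G_1) = \overline{d} N^2(1 + o(1))$ and $\deg_{G_1}(v) \in ((\overline{d} - 3\eps)N, (\overline{d} + 3\eps)N)$ for every typical $v \notin B$. For the regularity of $G_1$, a direct computation gives
\[
    \Ex\left[\sum_{x, x' \in X_1} |N_{G_1}(x) \cap N_{G_1}(x')|^2\right] = p^4 \sum_{x, x'} |N_G(x) \cap N_G(x')|^2 + O(N^3),
\]
which, using $\eps$-regularity of $G$ together with a standard $C_4$-counting bound, is at most $\overline{d}^4 N^4 + O(\eps N^4)$. Applying Markov's inequality to the non-negative quantity $\sum |N_{G_1}(x) \cap N_{G_1}(x')|^2 - \overline{d}^4 N^4 \ge 0$ then yields, with positive probability, a codegree sum of at most $\overline{d}^4 N^4 + O(\eps N^4)$, so Lemma \ref{lemma:quasirandom} with $\xi = O(\eps)$ gives that $G_1$ is $O(\eps^{1/2})$-regular.

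For each $v \in B$, randomly add or remove edges incident to $v$ in $G_1$ to bring $\deg_{G_1}(v)$ to $\lfloor \overline{d} N \rfloor$; feasibility of adding follows from $\deg_G(v) \ge \delta N \ge \overline{d} N$. The total number of edge changes is at most $|B| \cdot N = O(\eps N^2)$, and each is incident to a vertex of $B$. A typical $v' \notin B$ has at most $|B| = O(\eps N)$ neighbors in $B$, so its degree is affected by $O(\eps N)$ changes and remains in $((\overline{d} - O(\eps^{1/2}))N, (\overline{d} + O(\eps^{1/2}))N)$. For regularity, the uniform-random choice of corrected edges ensures that for any $A_1 \subseteq X_1, A_2 \subseteq X_2$ with $|A_i| \ge \eps^{1/2} N$, the expected number of corrected edges in $A_1 \times A_2$ is at most $O(|B|(|A_1| + |A_2|)) = O(\eps^{1/2} |A_1||A_2|)$, so $d(A_1, A_2)$ changes by at most $O(\eps^{1/2})$ with positive probability. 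A final adjustment of $O(\eps N^2)$ uniformly random edges brings the density to exactly $\overline{d}$ with negligible effect on the other properties.

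The main technical obstacle is ensuring that the $O(\eps N^2)$ corrections at atypical vertices do not destroy the $O(\eps^{1/2})$-regularity of $G_1$: naively the changes could concentrate on a moderately small pair of sets. This is handled by the uniform-random choice of corrections and the first-moment bound above. The remaining ingredients---a standard second-moment computation feeding into Lemma \ref{lemma:quasirandom}, and Chernoff bounds for typical-vertex degrees and the edge count---are routine.
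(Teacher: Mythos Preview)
Your proof follows essentially the same strategy as the paper's: take a random edge-subsample of $G$ at the target density, then locally add or delete edges at the small set of vertices with atypical degree, and finally trim to the exact density. The only genuine difference is in how regularity of the sampled graph is verified: the paper applies Chernoff and a union bound directly over all pairs of subsets of size at least $\eps N$, whereas you route through Lemma~\ref{lemma:quasirandom} by bounding the expected labelled-$C_4$ count. Both are standard; your route has the pleasant feature of reusing a tool already stated in the paper, while the paper's argument avoids the extra Markov step. The paper also identifies the exceptional set in the \emph{sampled} graph rather than in $G$, and explicitly avoids routing corrected edges between exceptional vertices, but your version (exceptional set in $G$, deterministic bound on corrected edges into any large pair) works equally well.

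One minor sloppiness worth fixing: the quantity $\sum_{x,x'} |N_{G_1}(x)\cap N_{G_1}(x')|^2 - \overline{d}^4 N^4$ is not deterministically non-negative, since $e(G_1)$ can undershoot $\overline{d} N^2$. The fix is routine: first condition on the high-probability event $e(G_1) \ge (\overline{d} - o(1))N^2$, so that the standard $C_4$ lower bound gives $\sum_{x,x'} |N_{G_1}(x)\cap N_{G_1}(x')|^2 \ge (\overline{d}^4 - o(1)) N^4$, and then apply Markov to the shifted quantity; the extra $o(1)$ is absorbed into the $O(\eps)$ slack before invoking Lemma~\ref{lemma:quasirandom}.
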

\begin{proof}
    Let $d_0$ denote the density of $G$. Let $d = \overline{d}+C\eps$. By $\eps$-regularity of $G$, all but $O(\eps n)$ vertices of $G$ have degree $(d_0\pm \eps)N$. Consider a subgraph $\tilde{G}$ of $G$ obtained by retaining each edge of $G$ independently with probability $d/d_0$. By standard concentration inequality, $\tilde{G}$ has density $d+o_N(1)$. Furthermore, all but an $O(\eps) + o_N(1)$ fraction of vertices have degree $(d \pm 2\eps)N$. Let $L$ denote the set of all vertices of $\tilde{G}$ with degree less than $(d - 2\eps)N$, and $H$ the set of all vertices of $\tilde{G}$ with degree larger than $(d + 2\eps)N$.
    
    For each vertex $v\in L$, the degree of $v$ in $G$ is at least $\delta N$, and hence the degree of $v$ to $V(G) \setminus (L\cup H)$ is at least $(\delta - O(\eps))N$. We then consider a subgraph $\tilde{G}'$ of $G$ where for each $v\in L$, we add $dN-\deg_{\tilde{G}}(v)$ edges between $v$ and $V(G)\setminus (L\cup H)$ to $\tilde{G}$. Note that every vertex in $L$ has degree $dN$ in $\tilde{G}'$, and every vertex of $V(G) \setminus (L\cup H)$ has degree in $(d\pm O(\eps))N$ in $\tilde{G}'$, as $|L|/N \le O(\eps)+o_N(1)$. We then consider a subgraph $\tilde{G}''$ of $\tilde{G}'$ where for each $v\in H$, we remove $\deg_{\tilde{G}}(v)-dN$ edges between $v$ and $V(G)\setminus (L\cup H)$. Then every vertex in $\tilde{G}''$ has degree $(d\pm O(\eps))N$. By choosing the constant $C$ sufficiently large, we can guarantee that the density of $\tilde{G}''$ is at least $\overline{d}$. Finally, we can find a subgraph $\overline{G}$ of $\tilde{G}''$ of density exactly $\overline{d}$, for which at most $O(\eps N)$ edges are removed around each vertex. By running a greedy removal procedure subject to the constraint that at most $O(\eps N)$ edges around each vertex is removed, it is easy to see that such subgraph of $\tilde{G}''$ exists. 

    For two vertex subsets $X_1'$ and $X_2'$ of size at least $\eps N$, the number of edges between $X_1'$ and $X_2'$ in $\tilde{G}$ is $(d\pm \eps \pm \gamma)|X_1'||X_2'|$ with probability at least $\exp(-c\gamma^2 d_0|X_1'||X_2'|)$. Choosing $\gamma = o_N(1)$ appropriately, by the union bound over $X_1'$ and $X_2'$, for $N$ sufficiently large, we then obtain, with high probability, that the number of edges between $X_1'$ and $X_2'$ in $\tilde{G}$ is $(d\pm \eps \pm \gamma)|X_1'||X_2'|$ for all $|X_1'|,|X_2'|\ge \eps N$. The number of edges between $X_1'$ and $X_2'$ in $\overline{G}$ is then $(d\pm \eps \pm \gamma)|X_1'||X_2'| \pm O(\eps N)(|X_1'|+|X_2'|)$. For $|X_1'|,|X_2'| \ge \eps'N$ for appropriate $\eps'=O(\eps^{1/2})$, we then have that $\overline{G}$ is $\eps'$-super-regular since $O(\eps) N(|X_1'|+|X_2'|) < (\eps'/2)|X_1'||X_2'|$. 
\end{proof}

A key ingredient in our proof is a result of the second author, Sah, Sawhney, and Simkin \cite[Theorem 4.2]{PSSS}:

\begin{thm} \label{thm:match}
For every $d_0 > 0$ there exists $\eps > 0$ such that the following holds. Let $G$ be an $\epsilon$-super-regular bipartite graph with parts of size $m$ and density $d \ge d_0$. Then there exists a distribution $\mu$ on perfect matchings of $G$ which is $O_{d}(1/m)$-spread. 
\end{thm}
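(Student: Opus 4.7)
The plan is to realize $\mu$ as the output distribution of a random sequential greedy matching on $G = (A \cup B, E)$ with $|A| = |B| = m$. Sample a uniformly random permutation of $A$ and process the vertices in this order: when $a$ is processed, choose uniformly at random an edge joining $a$ to the currently unmatched subset of $B$ and add it to the matching $M$. Run this process for the first $(1 - \eta)m$ steps with $\eta > 0$ a small absolute constant, then complete the matching on the $\eta m$ leftover vertices via Hall's theorem applied to the residual bipartite graph, which will still be super-regular and hence expanding.

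The key analytical ingredient is that throughout the first $(1-\eta)m$ steps, the induced bipartite subgraph on the unmatched vertices remains $\epsilon'$-super-regular with density $d \pm o(1)$. To verify this, Lemma \ref{lemma:quasirandom} is crucial: it reduces $\epsilon'$-regularity to a single second-moment statistic on codegrees, which is a sum of $O(m^2)$ bounded random variables with $O(1)$-bounded adaptive differences, hence amenable to a Doob martingale combined with Azuma or Freedman concentration. Individual degrees in the residual are controlled by an analogous martingale. On the resulting super-regularity event, the spread bound follows by a chain-rule argument: fixing distinct pairs $(a_i, b_i)$ for $i \in [k]$ and letting $\tau_i$ be the step at which $a_i$ is processed, the event ``residual is still $\epsilon'$-super-regular at step $\tau_i - 1$'' is measurable with respect to the history of the process up to step $\tau_i - 1$, and on this event the candidate neighbors of $a_i$ number $\Theta(dm)$, so the conditional probability of matching $a_i$ to $b_i$ is at most $C_d/m$. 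Iterating along $i=1,\dots,k$ yields $\Pr[\bigwedge_i (a_i,b_i) \in M, \text{ good throughout}] \le (C_d/m)^k$.

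The main obstacle is balancing the range of $k$ in the spread definition against the tail of the super-regularity event. For $k$ close to $m$ the target bound $(C_d/m)^k$ is super-exponentially small, so a naive failure probability of $\exp(-\Omega(m))$ does not automatically fit inside it. The resolution, as in \cite{PSSS}, is either to sharpen the concentration to $\exp(-\Omega(m \log m))$ using finer control of the per-step random choices, or to run the algorithm in $O(\log m)$ phases on successively smaller residual instances so that the union of per-phase failure events still fits within $(C_d/m)^k$ for every $k \in [m]$. A secondary technical point is the deterministic completion on the last $\eta m$ vertices: since each edge is processed in the random greedy phase with probability $1-\eta$, the tail only costs a $(1/\eta)^{O(1)}$ factor that may be absorbed into $C_d$.
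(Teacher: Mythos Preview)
The paper does not prove Theorem~\ref{thm:match} at all; it is quoted verbatim from \cite[Theorem~4.2]{PSSS} and used as a black box. So there is no ``paper's own proof'' to compare against here, and what you have written is essentially an attempt to reconstruct the argument of \cite{PSSS}.

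Your overall scheme (random sequential greedy, track super-regularity of the residual via Lemma~\ref{lemma:quasirandom} and martingale concentration, chain-rule for the spread bound) is on the right track, but the treatment of the final $\eta m$ vertices is a genuine gap. A \emph{deterministic} completion via Hall's theorem destroys the spread property, and your justification that ``the tail only costs a $(1/\eta)^{O(1)}$ factor'' is incorrect. Concretely: for a fixed pair $(a,b)$, the event that $a$ lands among the last $\eta m$ positions of the random permutation has probability $\eta$, and conditional on this event the deterministic rule may well send $a$ to $b$ with probability bounded away from zero. Summing over which of the $k$ pairs fall into the tail gives a contribution of order $(\eta + C/m)^k$ to $\Pr\bigl[\bigwedge_i \phi(a_i)=b_i\bigr]$, which for constant $\eta$ is $\Theta(\eta^k)$, not $(C'/m)^k$. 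Even the single-vertex case $k=1$ already fails.

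The fix is exactly the phased approach you mention for the other obstacle: do \emph{not} finish deterministically, but instead recurse on the residual $\eta m$-by-$\eta m$ instance (which you have argued is still super-regular), and iterate for $O(\log m)$ rounds until the instance has constant size. Then every vertex is matched by a random greedy step in some phase of size $m_j$, contributing a factor $O(1/m_j) \le O(1/m) \cdot (1/\eta)^{j}$; since each phase contains only an $\eta$-fraction of the previous one, the product telescopes correctly and one recovers the $(O_d(1)/m)^k$ bound uniformly in $k$. This recursion simultaneously handles the concentration-versus-target-bound tension you flagged, so the two ``obstacles'' are really one and should be resolved together rather than treated as separate issues.
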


The following lemma is a version of Theorem \ref{thm:match} which applies to $(\eps,\delta)$-super-regular graphs, rather than $\eps$-super-regular graphs. It follows directly from Lemma \ref{lem:super-regular}.

\begin{lem}  \label{lem:match}
For every $d_0, \delta > 0$ there exists $\eps > 0$ such that the following holds. Let $G$ be an $(\epsilon,\delta)$-super-regular bipartite graph with parts of size $m$ and density $d \ge d_0$. Then there exists a distribution $\mu$ on perfect matchings of $G$ which is $O_{\delta}(1/m)$-spread. 
\end{lem}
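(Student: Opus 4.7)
The plan is to apply Lemma~\ref{lem:super-regular} to replace the $(\eps,\delta)$-super-regular graph $G$ by a spanning subgraph $\overline{G}$ that is $\eps'$-super-regular (in the stronger, density-preserving sense) with a chosen density depending only on $\delta$, and then invoke Theorem~\ref{thm:match} on $\overline{G}$. Every perfect matching of $\overline{G}$ is a perfect matching of $G$, so a spread distribution on the former transfers directly to a spread distribution on the latter.

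More concretely, I would first fix $\overline{d} := \delta/4$ (any constant comfortably below $\delta$ works) as the target density for the subgraph we extract, noting that $\overline{d}$ depends only on $\delta$. Let $\eps_0 = \eps_0(\overline{d})$ be the constant returned by Theorem~\ref{thm:match} when applied with $d_0$ replaced by $\overline{d}$, so that any bipartite graph which is $\eps_0$-super-regular with density at least $\overline{d}$ admits an $O_{\overline{d}}(1/m)$-spread distribution on perfect matchings. Next, choose $\eps > 0$ small enough that (i) $\eps \le \delta/(2C)$, so that the hypothesis $\overline{d} \le \delta - C\eps$ of Lemma~\ref{lem:super-regular} is satisfied, and (ii) the parameter $\eps' = O(\sqrt{\eps})$ produced by Lemma~\ref{lem:super-regular} is at most $\eps_0$. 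Both are purely a matter of choosing $\eps$ sufficiently small in terms of $d_0$ and $\delta$.

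With these choices, Lemma~\ref{lem:super-regular} supplies a spanning subgraph $\overline{G} \subseteq G$ which is $\eps'$-super-regular with density exactly $\overline{d}$. Applying Theorem~\ref{thm:match} to $\overline{G}$ yields a distribution $\mu$ on perfect matchings of $\overline{G}$ that is $O_{\overline{d}}(1/m)$-spread; since $\overline{d}$ is a function of $\delta$ alone, this is $O_{\delta}(1/m)$-spread, as required. Because $\overline{G}$ is a subgraph of $G$, $\mu$ is supported on perfect matchings of $G$.

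There is essentially no obstacle beyond parameter bookkeeping; the original parameter $d$ of $G$ plays no further role once we pass to the subgraph of fixed density $\overline{d}$, so the hypothesis $d \ge d_0$ is used only to invoke Lemma~\ref{lem:super-regular} indirectly (through $\delta > 0$), and the dependence of the spread constant on $d_0$ vanishes in the statement.
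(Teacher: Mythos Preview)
Your proof is correct and is exactly the argument the paper has in mind: the paper simply writes that the lemma ``follows directly from Lemma~\ref{lem:super-regular}'', and your proposal spells out precisely this, passing to an $\eps'$-super-regular spanning subgraph of fixed density $\overline{d}=\Theta(\delta)$ and then applying Theorem~\ref{thm:match}. Your remark that the hypothesis $d\ge d_0$ is essentially redundant (since $(\eps,\delta)$-super-regularity already forces $d\ge\delta$) is also correct.
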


\subsection{Algorithm} \label{sec:algo}

\paragraph{Pre-processing.} Define parameters 
$$
\eps \ll \eps' \ll \eps'' \ll \beta \ll \delta_3 \ll \delta_2 \ll \delta_1 \ll \delta_0 \ll d, \alpha.
$$
We use $\ll$ to denote ``sufficiently smaller''. We shall not bother ourselves, nor the reader, with defining these constants precisely. It will be rather clear that they can be specified in this relative order such that all inequalities in the proof hold.

Let $H_i = h^{-1}(i)$ denote the set of vertices of $H$ mapped to the vertex $i \in R$ in the given homomorphism $h$. By adding isolated vertices, we can assume $|H_i| = N$ and $|V(H)| = r N := n$. For each $i\in R$, choose disjoint sets $D_i, B_i \subseteq H_i \setminus W$ of size 
$$
    |B_i| = \lceil \delta_0 N \rceil \quad \text{ and } \quad D_i = \lceil \beta N \rceil,
$$
such that there is no edge between $B \cup D$ and $W$ and every two vertices in $B \cup D$ are at distance at least 4 in $H$, where
$$
    B := \bigcup_{i = 1}^r B_i \quad \text{ and } \quad D := \bigcup_{i = 1}^r D_i.
$$
Change the graph $H$ by adding to it some number of edges such that the previous two properties still hold, and in addition every vertex in $B$ has degree exactly $\Delta$ and no vertex has degree larger than $\Delta + 1$. We denote the resulting graph, again, as $H$.

\paragraph{Quasirandom embedding.} The concept of a \emph{quasirandom} partial embedding plays the key role in the analysis of the algorithm. 

\begin{definition}
Given $S \subseteq V(H)$, we say that an embedding $\phi : H[S] \hookrightarrow G$ is \emph{$S$-quasirandom} if:
\begin{enumerate}[(P1)]
    \item \label{prop:C} for every $x \in V(H) \setminus S$, the common neighborhood 
    $$
        C_{\phi}(x) := W_x \cap \bigcap_{y \in N_H(x) \cap S} N_G(\phi(y))
    $$
    is of size 
    \begin{equation} \label{eq:C_large}
        |C_{\phi}(x)| \ge (d - \eps)^{|N_H(x) \cap S|} |W_x|,
    \end{equation}
    \item \label{prop:C_second_moment} for each $i \in R$, all but at most $\eps' |S| N$ pairs $(x,y) \in (H_i \setminus (N_H(D) \cup S))^2$ satisfy
    \begin{equation} \label{eq:C_intersection}
        |C_{\phi}(x) \cap C_{\phi}(y)| \le (d + \eps)^{|N_H(x) \cap S| + |N_H(y) \cap S|} N.
    \end{equation}
\end{enumerate}
\end{definition}

We exclude $N_H(D)$ in \ref{prop:C_second_moment} because we will have limited control over how $D$ is embedded, and consequently how sets $C_\phi(x)$ for $x \in N_H(D)$ interact with others.

The two properties come into play through Lemma \ref{lemma:quasirandom}. We summarise this in the following claim, which assumes the setup of Lemma \ref{lem:spread} and previously described pre-processing of $H$.

\begin{claim} \label{claim:eps_regular_C}
    Suppose $\phi$ is $S$-quasirandom for some $S \subseteq V(H)$. Given $i \in R$ and $U \subseteq H_i \setminus S$, form the bipartite graph $\mathcal{B}_i(\phi, U)$ with parts $U$ and $V_i$, where $x \in U$ is connected to $v \in V_i$ if $v \in C_\phi(x)$. If:
    \begin{itemize}
        \item $U$ is disjoint from $W \cup N_H(D)$, 
        \item $|U| \ge \delta_3 N$, and 
        \item $|N_H(x) \cap S| = \ell$ for every $x \in U$, for some $\ell \in \{0, \ldots, \Delta+1\}$, 
    \end{itemize}
    then $\mathcal{B}_i(\phi, U)$ is $\eps''$-regular with density at least $(d - \eps)^\ell$.
\end{claim}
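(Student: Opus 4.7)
The plan is to apply Lemma \ref{lemma:quasirandom} to the bipartite graph $\mc B := \mc B_i(\phi, U)$. The density lower bound is immediate from (P\ref{prop:C}): since $U$ is disjoint from $W$, we have $W_x = V_i$ for every $x \in U$, so $|N_{\mc B}(x)| = |C_\phi(x)| \ge (d-\eps)^\ell N$. In particular the density $\tilde d$ of $\mc B$ satisfies $\tilde d \ge (d-\eps)^\ell \ge (d-\eps)^{\Delta+1} =: d_0$, a fixed positive constant that serves as the density threshold in Lemma \ref{lemma:quasirandom}.

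For the second-moment condition, I invoke (P\ref{prop:C_second_moment}). The hypothesis that $U$ avoids $N_H(D)$ and $S$ gives $U^2 \subseteq (H_i \setminus (N_H(D) \cup S))^2$, so property (P\ref{prop:C_second_moment}) applies: all but at most $\eps' |S| N$ pairs $(x,x') \in U^2$ satisfy $|C_\phi(x) \cap C_\phi(x')| \le (d+\eps)^{2\ell} N$, where the uniform exponent $\ell$ comes from the third hypothesis of the claim. Bounding the remaining ``bad'' pairs trivially by $N^2$, I get
\begin{equation*}
\sum_{x,x' \in U} |C_\phi(x) \cap C_\phi(x')|^2 \le (d+\eps)^{4\ell} |U|^2 N^2 + \eps' |S| N^3.
\end{equation*}
Since $|S| \le n = rN$ and $|U| \ge \delta_3 N$ (the second hypothesis), the second term is at most $(\eps' r / \delta_3^2) |U|^2 N^2$. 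To pass from $(d+\eps)^{4\ell}$ to $\tilde d^4$, I would use the elementary estimate $(d+\eps)^{4\ell} - \tilde d^4 \le (d+\eps)^{4\ell} - (d-\eps)^{4\ell} = O(\eps)$, valid for $\ell \le \Delta+1$ and $d \le 1$.

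Combining, the sum is at most $\tilde d^4 |U|^2 N^2 + [O(\eps) + \eps' r/\delta_3^2] |U|^2 N^2$. Under the hierarchy $\eps \ll \eps' \ll \eps''$ fixed at the top of the section (with $r, \Delta, \delta_3$ already chosen), the bracketed quantity is at most $\xi(\eps'', d_0)$, and Lemma \ref{lemma:quasirandom} then yields the desired $\eps''$-regularity of $\mc B$. There is no substantial obstacle here; the argument is essentially bookkeeping of how the three hypotheses of the claim line up with the two quasirandomness properties. The one thing worth flagging is that the exclusion of $N_H(D)$ from (P\ref{prop:C_second_moment}) is exactly why the claim insists $U \cap N_H(D) = \emptyset$, and the $|U| \ge \delta_3 N$ lower bound is needed precisely to absorb the $\eps'|S|N$ error into the $|U|^2$ denominator appearing in Lemma \ref{lemma:quasirandom}.
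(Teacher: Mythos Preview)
Your proof is correct and follows the same approach as the paper's: both derive the density lower bound from (P\ref{prop:C}) using $U\cap W=\emptyset$, bound the second-moment sum via (P\ref{prop:C_second_moment}) together with the trivial bound $N^2$ on the exceptional pairs, absorb the error using $|U|\ge\delta_3 N$, and then invoke Lemma~\ref{lemma:quasirandom}. The only cosmetic difference is that you explicitly track the factor $|S|\le rN$ in the error term, whereas the paper writes $\eps' N^4$ directly and absorbs $r$ into the implicit constant.
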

\begin{proof}
    By \ref{prop:C} and $U \cap W = \emptyset$, each $x \in U$ has degree  $|C_\phi(x)| \ge (d - \eps)^\ell N$, thus the density is at least $d' := (d - \eps)^\ell$. By \ref{prop:C_second_moment}, we have
    $$
        \sum_{x \in U} \sum_{x' \in U} |C_\phi(x) \cap C_\phi(x')|^2 \le (d + \eps)^{4\ell} |U|^2 N^2 + \eps' N^4 = (d')^4 |U|^2 N^2 + O\left( \frac{\eps}{d} + \frac{\eps'}{\delta_3^2} \right) |U|^2 N^2. 
    $$
    The claim now follows from Lemma \ref{lemma:quasirandom}.
\end{proof}

\paragraph{Embedding algorithm.} 
Pick an arbitrary permutation $(x_1, \ldots, x_{n})$ of $V(H)$ such that $N_H(B)$ comes first and $B$ comes last. As the algorithm progresses this ordering might change, and we always use $(x_1, \ldots, x_n)$ to denote the current ordering and $X_j = \{x_1, \ldots, x_j\}$.  %

\paragraph{Phase I: Embed $H \setminus B$ (and maybe some vertices from $B$).}
Set $j = 0$. As long as there exists a vertex in $V(H) \setminus B$ which has not been embedded yet, that is $V(H) \setminus X_j \not \subseteq B$, do the following:

\begin{itemize}
    \item \label{alg:Q} If $j$ is a multiple of $s := \lceil \delta_2 N \rceil$: Define the set of `low' vertices $L_j \subseteq V(H) \setminus X_j$ as
    \begin{equation} \label{eq:low_set}
        L_j = \left\{ v \in V(H) \setminus X_j \colon |C_\phi(x) \setminus \phi(X_j)| < \delta_1 N \right\}.  
    \end{equation}
    Change $(x_{j+1}, \ldots, x_n)$ by moving $L_j$ forwards, while keeping all other vertices in the same relative order. Note that it is possible that $L_j$ contains vertices from $B$.
    
    \item \label{alg:good_A} Let $A_{j+1} \subseteq C_\phi(x_{j+1}) \setminus \phi(X_j)$ denote the set of all vertices $v \in C_\phi(x_{j+1}) \setminus \phi(X_j)$ such that:
    \begin{enumerate}[(i)]
        \item \label{prop:large_free} for every $y \in N_H(x_{j+1}) \setminus X_j$,
        \begin{equation} \label{eq:free_large}
            |N_G(v) \cap (C_y(\phi) \setminus \phi(X_j))| \ge (d - \eps) |C_y(\phi) \setminus \phi(X_j)|,
        \end{equation}
        \item \label{prop:quasirandom_extension} extending $\phi$ by setting $\phi(x_{j+1}) = v$ results in a $X_{j+1}$-quasirandom embedding of $H[X_{j+1}]$.
    \end{enumerate}
    Pick $v \in A_{j+1}$ uniformly at random. Set $\phi(x_{j+1}) := v$ and $j := j + 1$.
\end{itemize}

\noindent
We make exception to the above procedure only in the case $j = |N_H(B)|$. Namely, instead of running described steps, we do the following:
\begin{itemize}
    \item For each $i \in R$, let $E_i \subseteq V_i \setminus \phi(X_j)$ denote the set of all vertices $v \in V_i \setminus \phi(X_j)$ such that 
    \begin{equation} \label{eq:bad_v_B}
        |\{b \in B_i: v \in C_\phi(b) \}| < \delta_1 |B_i|.
    \end{equation}
    
    Take a random injection $\rho_i \colon E_i \to D_i$, and move vertices $\bigcup_{i = 1}^r \rho_i(E_i)$ to the beginning of the ordering $x_{j+1}, \ldots, x_n$ while preserving the relative ordering of all other vertices. For each $x \in \rho_i(E_i)$ set $\phi(x) = \rho_i^{-1}(x)$.  Set $j := j + \sum_{i = 1}^r |E_i|$.
\end{itemize}
The purpose of this step is to ensure that each vertex not used in the embedding in Phase I is a candidate for many vertices in $B$, aiming towards Phase II where we embed $B$ in a $O(1/N)$-vertex-spread manner.

Let $T$ be the value of $j$ once the algorithm terminates. Note that $x_T$ was the last vertex embedded by the given procedure. 

\paragraph{Phase II: Embed $B$.}

We take care of the remaining vertices in $B_i$ using Lemma \ref{lem:match}.
\begin{itemize}
    \item \label{alg:B_matching} For each $i \in R$, consider the bipartite graph $G_i$ with parts $B_i \setminus X_T$ and $V_i \setminus \phi(X_T)$, where $x \in B_i \setminus X_T$ is connected to $v \in V_i \setminus \phi(X_T)$ if $v \in C_\phi(x)$. Let $\mu_i$ be the probability distribution over perfect matchings in $\mathcal{B}_i$ given by Lemma \ref{lem:match}. Sample a perfect matching from each $\mu_i$ and define $\phi$ on $B_i \setminus X_T$ accordingly. 
    
    \item Output the embedding $\phi$. 
\end{itemize}

\noindent
Finally, we define $\lambda$ to be the output distribution of the described algorithm.

\subsection{Proof of correctness and vertex-spread}

We are now ready to prove Lemma \ref{lem:spread}. First, we observe that, by Lemma \ref{lem:super-regular}, letting $\overline{d} = \delta/2$, upon a polynomial change in $\eps$, we can find a subgraph of $G$ in which every pair $(V_i,V_j)$ is $\eps$-super-regular with density $d$. As such, we can assume without loss of generality in the proof of Lemma \ref{lem:spread} that every pair $(V_i,V_j)$ is $\eps$-super-regular with density $d$ and $\eps$ is sufficiently small in $d$. 

\begin{proof}[Proof of Lemma \ref{lem:spread}]
    For the sake of the algorithm being well-defined, for now we assume that in the case some step is not possible to perform, we simply terminate. We start with some basic observations and properties of Phase I. We split the analysis into three parts: early stage ($j < |N_H(B)|$), exceptional stage ($j = |N_H(B)|$), and regular stage ($j > |N_H(B)|$).

    \paragraph{Early stage: $j < |N_H(B)|$.} By the definition of $A_{j+1}$, after each iteration the embedding $\phi$ is $X_j$-quasirandom. Therefore $|C_\phi(x)| \ge (d - \eps)^{\Delta + 1} \alpha  N \gg \delta_1 N + |N_H(B)|$ for every $x \in V(H)$, which implies $L_j = \emptyset$ for every $j \in s \mathbb{Z}$. Consequently, no vertex is moved forwards.

    \paragraph{Exceptional stage: $j = |N_H(B)|$.} By the previous observation, there was no change in the ordering so at this point we have embedded all the vertices in $N_H(B)$ and nothing else. Since vertices in $B$ and $D$ are at distance at least 4, there is no edge between $N_H(D)$ and $N_H(B)$. Therefore $C_\phi(x) = V_i$ for every $x \in N_H(D)$, thus embedding any subset of vertices $D$ maintains the property \ref{prop:C} due to $\eps$-super-regularity assumption. The property \ref{prop:C_second_moment} is not concerned with vertices in $N_H(D)$, which are the only vertices affected by an embedding of (a subset of) $D$. To conclude, once we update $j$ at the end of this iteration, we again have an $X_j$-quasirandom embedding. The next claim shows $|E_i| < |D_i|$, and this part of the algotirhm is well-defined. 

    \begin{claim}
        For each $i \in [r]$, the set $E_i$ is of size $|E_i| \le \eps'' N$.
    \end{claim}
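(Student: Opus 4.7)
The plan is to invoke Claim~\ref{claim:eps_regular_C} with $U = B_i$ and then apply a standard consequence of $\eps$-regularity to bound the number of low-degree vertices on the $V_i$ side. More precisely, here is the outline I would follow.

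First I would verify the hypotheses of Claim~\ref{claim:eps_regular_C} at this stage of the algorithm. By the analysis of the early stage just before this claim, the embedding $\phi$ at the iteration where $j = |N_H(B)|$ is $X_j$-quasirandom, $X_j = N_H(B)$, and no reordering has occurred. The set $B_i$ is disjoint from $W$ by construction, and disjoint from $N_H(D)$ because every pair of vertices in $B\cup D$ is at distance at least $4$ in $H$. By the pre-processing every vertex $b\in B_i$ has degree exactly $\Delta$ in $H$, and since every neighbor of $b$ lies in $N_H(B) = X_j = S$, we have $|N_H(b)\cap S| = \Delta$ for each $b\in B_i$. Finally $|B_i| \ge \delta_0 N \ge \delta_3 N$ by the parameter ordering $\delta_3 \ll \delta_0$. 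Hence Claim~\ref{claim:eps_regular_C} (with $\ell = \Delta$) applies and gives that the bipartite graph $\mathcal{B}_i(\phi, B_i)$ with parts $B_i$ and $V_i$ is $\eps''$-regular of density at least $(d-\eps)^{\Delta}$.

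Next I would read off $E_i$ as the ``low degree'' side of this regular pair. By the very definition \eqref{eq:bad_v_B}, a vertex $v\in V_i\setminus \phi(X_j)$ lies in $E_i$ precisely when its degree in $\mathcal{B}_i(\phi, B_i)$ is strictly less than $\delta_1|B_i|$. Denote the set of all $v\in V_i$ with this low-degree property by $V_i^{\mathrm{bad}}$, so that $E_i \subseteq V_i^{\mathrm{bad}}$. If $|V_i^{\mathrm{bad}}| \ge \eps'' N$, then by $\eps''$-regularity of $\mathcal{B}_i(\phi, B_i)$ one would have
$$
    d_{\mathcal{B}_i(\phi, B_i)}(B_i, V_i^{\mathrm{bad}}) \ge (d-\eps)^{\Delta} - \eps'',
$$
whereas the low-degree condition forces this density to be less than $\delta_1$. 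Since $\delta_1 \ll d$ (so in particular $\delta_1 < (d-\eps)^{\Delta} - \eps''$), this is a contradiction, so $|V_i^{\mathrm{bad}}| \le \eps'' N$, and therefore $|E_i| \le \eps'' N$ as well.

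There is no real obstacle here: the only thing one must be careful about is ensuring that the hypotheses of Claim~\ref{claim:eps_regular_C} are satisfied uniformly on $U = B_i$, and this is exactly why the pre-processing padded the degrees inside $B_i$ to be exactly $\Delta$ and placed $B$ at the end of the initial ordering. Once Claim~\ref{claim:eps_regular_C} applies, the bound on $|E_i|$ is the textbook fact that in an $\eps''$-regular pair of density $d'$ at most $\eps'' N$ vertices on either side can have degree below $(d' - \eps'')N$.
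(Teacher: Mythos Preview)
Your proposal is correct and follows essentially the same argument as the paper: verify the hypotheses of Claim~\ref{claim:eps_regular_C} for $U=B_i$ (using that $B_i$ avoids $W\cup N_H(D)$, has size $\ge \delta_3 N$, and that the pre-processing forces $|N_H(b)\cap X_j|=\Delta$ for every $b\in B_i$), deduce that $\mathcal{B}_i(\phi,B_i)$ is $\eps''$-regular of density at least $(d-\eps)^\Delta$, and then use the standard fact that at most $\eps'' N$ vertices of $V_i$ can have degree below $(d'-\eps'')|B_i|\gg \delta_1|B_i|$. The paper's proof is the same, only stated more tersely.
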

    \begin{proof}
        By the modification we introduced to $H$, every $b \in B_i$ has exactly $\Delta$ neighbors in $X_j = \phi(N_H(B))$ and $B_i \cap N_H(B) = \emptyset$. Since $\phi$ is $X_j$-quasirandom, by Claim \ref{claim:eps_regular_C} the bipartite graph $\mathcal{B}_i(\phi, B_i)$ is $\eps''$-regular of density at least $d' = (d - \eps)^{\Delta}$. Therefore, the set of vertices in $V_i$ with degree less than $(d' - \eps)N \gg \delta_1 N$ in $\mathcal{B}_i(\phi, B_i)$ is at most $\eps'' N$.
    \end{proof}    

    \paragraph{Regular stage: $j > |N_H(B)|$.}
   So far we have concluded that the algorithm is well-define up to, and including, $j = |N_H(B)|$, and moreover the obtained partial embedding $\phi$ is $X_j$-quasirandom. Next, we show it is well-defined until the end of Phase I. We start with a bound on sets $L_j$ defined in \eqref{eq:low_set}.

    \begin{claim} \label{claim:Q_small}
        For every $j \in s \mathbb{Z}$, we have $|L_j \setminus (W \cup N_H(D))| < r (\Delta + 1) \delta_3 N$.        
    \end{claim}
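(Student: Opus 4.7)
The plan is to partition $L_j \setminus (W \cup N_H(D))$ by the pair $(i, \ell)$ with $i = h(x)$ and $\ell = |N_H(x) \cap X_j|$. Since the pre-processing caps the maximum degree of $H$ at $\Delta + 1$, there are at most $r(\Delta + 2)$ such classes; for each class
\[
    U_{i, \ell} := \{x \in H_i \setminus (X_j \cup W \cup N_H(D)) : |N_H(x) \cap X_j| = \ell\},
\]
I will aim to show $|L_j \cap U_{i, \ell}| \le \delta_3 N$. Summing over classes, and absorbing the essentially trivial $\ell = 0$ contribution (where $C_\phi(x) = V_{h(x)}$ is untouched by any embedded neighbor), yields the claimed bound $r(\Delta + 1) \delta_3 N$.

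The main step is an application of Claim \ref{claim:eps_regular_C} together with the definition of $\eps''$-regularity. Fix a class $(i, \ell)$ and set $Z_i := V_i \setminus \phi(X_j)$, so that $|C_\phi(x) \setminus \phi(X_j)| = |C_\phi(x) \cap Z_i|$ for $x \in U_{i, \ell}$. If $|U_{i, \ell}| < \delta_3 N$ the bound is immediate, so assume $|U_{i, \ell}| \ge \delta_3 N$. Claim \ref{claim:eps_regular_C} then guarantees that $\mathcal{B}_i(\phi, U_{i, \ell})$ is $\eps''$-regular with density at least $(d - \eps)^\ell$. Because $U_{i, \ell} \subseteq H_i \setminus X_j$ has size at most $|Z_i|$, this case also forces $|Z_i| \ge \delta_3 N \gg \eps'' N$. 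Applying $\eps''$-regularity with the sub-part $Z_i \subseteq V_i$, at most $\eps'' |U_{i, \ell}| \le \eps'' N$ vertices $x \in U_{i, \ell}$ satisfy $|C_\phi(x) \cap Z_i| < ((d - \eps)^\ell - \eps'') |Z_i|$. Since each $x \in L_j \cap U_{i, \ell}$ has $|C_\phi(x) \cap Z_i| < \delta_1 N$, whenever $((d - \eps)^\ell - \eps'') |Z_i| \ge \delta_1 N$ all such $x$ lie in this exceptional set, giving $|L_j \cap U_{i, \ell}| < \eps'' N \ll \delta_3 N$.

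The remaining regime is $|Z_i| < \delta_1 N / ((d - \eps)^\ell - \eps'')$, in which $|U_{i, \ell}| \le |Z_i|$ is already small; here one appeals directly to the constant hierarchy (with $\delta_1$ chosen small relative to $(d - \eps)^{\Delta + 1} \delta_3$) to conclude $|U_{i, \ell}| < \delta_3 N$. In either sub-case $|L_j \cap U_{i, \ell}| < \delta_3 N$, which is all that is required.

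I expect the principal (and essentially only) obstacle to be this parameter bookkeeping: one must verify that both the $\eps''$-regularity exceptional set and the ``small $|Z_i|$'' bound land safely below $\delta_3 N$ under the stated hierarchy $\eps \ll \cdots \ll \delta_3 \ll \delta_2 \ll \delta_1 \ll \delta_0 \ll d, \alpha$. Beyond this, the argument reduces to a single invocation of Claim \ref{claim:eps_regular_C}.
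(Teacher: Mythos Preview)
Your overall plan --- partition by $(i,\ell)$ and invoke Claim~\ref{claim:eps_regular_C} to get $\eps''$-regularity of $\mathcal{B}_i(\phi,U_{i,\ell})$ --- is exactly the paper's approach. The gap is in the ``remaining regime'' (and, for the same reason, in the $\ell=0$ case). You write that one should take ``$\delta_1$ chosen small relative to $(d-\eps)^{\Delta+1}\delta_3$'', but the hierarchy in the paper is
\[
\eps \ll \eps' \ll \eps'' \ll \beta \ll \delta_3 \ll \delta_2 \ll \delta_1 \ll \delta_0 \ll d,\alpha,
\]
so $\delta_1$ is \emph{much larger} than $\delta_3$, not smaller. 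Consequently $|Z_i| < \delta_1 N / ((d-\eps)^\ell-\eps'')$ does \emph{not} force $|U_{i,\ell}|<\delta_3 N$; indeed $|Z_i|$ could sit anywhere in $[\delta_3 N,\, \delta_1 N/d']$ and your argument gives nothing. The $\ell=0$ case has the same problem: if some $x\in U_{i,0}$ lies in $L_j$ then $|Z_i|<\delta_1 N$, which is far from implying $|U_{i,0}|<\delta_3 N$.

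What is missing is a lower bound on $|Z_i|$ that is large compared to $\delta_1 N$, and this is where the buffer set $B$ enters. The paper argues by first violation: at the smallest $j\in s\mathbb{Z}$ where the claim fails, all earlier $L_{j'}$ satisfied the bound, so altogether at most $(1/\delta_2)\cdot r(\Delta+1)\delta_3 N + |W| + |N_H(D)| = O(\delta_3/\delta_2)N \ll \delta_0 N$ vertices have ever been moved forward. Since vertices of $B_i$ are last in the ordering and are only embedded in Phase~I if moved forward, at least $|B_i| - O(\delta_3/\delta_2)N \ge \delta_0 N/2$ of them remain unembedded, whence $|Z_i|\ge \delta_0 N/2 \gg \eps'' N$. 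With this in hand your $\eps''$-regularity step goes through (the ``remaining regime'' simply does not occur, and $\ell=0$ is genuinely trivial because $|Z_i|\ge \delta_0 N/2 \gg \delta_1 N$). Your bound $|U_{i,\ell}|\le |Z_i|$ is correct but insufficient on its own; the buffer argument is the essential missing ingredient.
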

    \begin{proof}
        Suppose this is not the case, and stop the process the first time we encounter $L_j$ which violates the desired bound. Choose $i \in [r]$ and $\ell \in \{0, \ldots, \Delta+1\}$ such that the set $U_{i}^\ell \subseteq (H_i \cap Q_L) \setminus (W \cup N_H(D))$, consisting of all vertices $x$ with $|N_H(x) \cap X_j| = \ell$, is of size $|U_i^\ell| \ge \delta_3 N$. By Claim \ref{claim:eps_regular_C}, the bipartite graph $\mathcal{B}_i(\phi, U_{i}^\ell)$ is $\eps''$-regular. 
        
        Let $F_i = V_i \setminus \phi(X_j)$. There are at most $1/\delta_2$ iterations where we change the order of vertices, thus up to this point we have moved forwards at most
        \begin{equation} \label{eq:bound_moved}
            \frac{1}{\delta_2} r(\Delta + 1)\delta_3 N + |W| + |N_H(D)| = O(\delta_3 / \delta_2) N \ll \delta_0 N
        \end{equation}
        vertices. Therefore, there are at least, say, $\delta_0 N / 2$ vertices in $B_i$ which are not yet embedded, and consequently $|F_i| \ge \delta_0 N / 2 \gg \eps'' N$. By $\eps''$-regularity of  $\mathcal{B}_i(\phi, U_{i}^\ell)$, the pair $(U_{i}^\ell, F_i)$ has density at least $d' = (d-\eps)^\ell - \eps''$. This implies there exists a vertex  $x \in U_{i}^\ell$ with at least $d' |F_i| \gg \delta_1 N$ neighbors in $F_i$. In other words, $|C_\phi(x) \setminus \phi(X_j)| \ge \delta_1 N$, contradicting the assumption $x \in Q_j$.
    \end{proof}
    
    The previous claim implies $|L_j| = O(\delta_3 N)$, thus all the vertices moved forwards either get embedded by the time $j$ reaches the next multiple of $s$, or the Phase I finishes before that happens. Moreover, by the definition of $A_{j+1}$, for every $j$ during Phase I and every $x \in V(H) \setminus X_j$, we have
    \begin{equation} \label{eq:free_C_lowerbound}        
        |C_\phi(x) \setminus \phi(X_j)| \ge (d - \eps)^{\Delta + 1} \delta_1 N - 2 s \gg \delta_2 N.
    \end{equation}
    This can be seen as follows. If $x \not \in L_j$ for some $j \in s \mathbb{Z}$, then  $|C_\phi(x) \setminus \phi(X_{j+s})| \ge (d - \eps)^{d_1} \delta_1 N - s$, where $d_1 = |N_H(x) \cap \{x_{j+1}, \ldots, x_{j+s}\}|$. If $x$ gets embedded by this point, then \eqref{eq:free_C_lowerbound} holds. Otherwise, we only need to consider the case $x \in L_{j+s}$. Then we have 
    $$
        |C_\phi(x) \setminus \phi(X_{j + 2s})| \ge (d - \eps)^{d_2}((d - \eps)^{d_1} \delta_1 N - s) - s \ge (d - \eps)^{d_1 + d_2} \delta_1 N - 2s,
    $$
    where $d_2 = |N_H(x) \cap \{x_{j+s+1}, \ldots, x_{j + 2s}\}|$. As $x \in L_{j+s}$ we know it is going to be embedded by the end of iteration $j + 2s$, thus \eqref{eq:free_C_lowerbound} holds in this case as well.
    
    We now estimate the size of $A_{j+1}$. By $\eps$-regularity and \eqref{eq:free_C_lowerbound}, all but at most $O_\Delta(\eps) N$ vertices $v \in W_{x_{j+1}}$  satisfy 
    \begin{align*}
        |N_G(v) \cap C_\phi(y)| &\ge (d - \eps) |C_\phi(y)| \\
        |N_G(v) \cap (C_\phi(y) \setminus \phi(X_j))| &\ge (d - \eps) |C_\phi(y) \setminus \phi(X_j)|
    \end{align*}
    for every $y \in N_H(x_{j+1}) \setminus X_j$. Let $\mathcal{P}$ denote the set of all pairs $(x,y) \in \bigcup_{i \in R} (H_i \setminus (N_H(D) \cup S))^2$ which satisfy \eqref{eq:C_intersection} and at least one of $x$ and $y$ is in $N_H(x_{j+1})$. By $\eps$-regularity, there are at most $\eps N$ vertices $v \in V_{h(x_{j+1})}$ such that \eqref{eq:C_intersection} would cease to hold for a particular pair $(x,y) \in \mathcal{P}$ after setting $\phi(x_{j+1}) = v$. Therefore, for a randomly chosen vertex $v \in V_{h(x_{j+1})}$, the expected number of pairs from $\mathcal{P}$ for which \eqref{eq:C_intersection} fails if we set $\phi(x_{j+1}) = v$ is at most $\eps |\mathcal{P}|$. By Markov's inequality, the probability of having more than $\frac{\eps'}{2(\Delta + 1)} |\mathcal{P}_i|$ failed pairs is at most $2 (\Delta + 1) \eps / \eps' \ll  \eps'$. As $|\mathcal{P}_i| \le 2(\Delta + 1)N$, this corresponds to  $\eps' N$ new pairs which do not satisfy \eqref{eq:C_intersection}. Put together, all but $O(\eps') N$ vertices in $C_{\phi}(x_{j+1}) \setminus X_j$ belong to $A_{j+1}$, thus by \eqref{eq:free_C_lowerbound} we conclude 
    \begin{equation} \label{eq:A_lower_bound}
        |A_{j+1}| \gg \delta_2 N.
    \end{equation}

    \paragraph{Phase II.} We show that Phase II is well defined. Let $B_i' = B_i \setminus X_T$. We have $|B_i'| \ge |B_i| - O(\delta_3 N / \delta_2)$ (by Claim \ref{claim:Q_small}), thus the bipartite graph $\mathcal{B}_i(\phi, B_i \setminus \phi(X_T))$ is $\eps''$-regular (by Claim \ref{claim:eps_regular_C}). As $F_i = V_i \setminus \phi(X_T)$ is of size $|F_i| = |B_i'|$, the pair $(B_i', F_i)$ is $\eps'''$-regular and it corresponds to the bipartite graph $G_i$ defined in the algorithm. Owing to the exceptional step $j = |N_H(B)|$, each $v \in F_i$ belongs to at least $\delta_1 |B_i| - O(\delta_3 N / \delta_2)$ sets $C_\phi(b)$ for $b \in B_i'$. By \eqref{eq:free_C_lowerbound}, for each $b \in B_i'$ we have $|C_\phi(b) \setminus \phi(X_T)| > \delta_2 N > \delta_3 |F|$. Therefore, the conditions of Lemma \ref{lem:match} are satisfied.

    \paragraph{Vertex-spread.} The fact that the output distribution is $O(1/N)$-vertex-spread follows from the lower bound \eqref{eq:A_lower_bound} on $A_{j+1}$, the set from which we choose embedding of $x_{j+1}$, the size of each $D_i$, and Lemma \ref{lem:match}.
\end{proof}

\section{An application to perturbed random graphs}

In this section, we discuss our application to the threshold for powers of Hamiltonian cycles in the perturbed random graph model, Theorem \ref{thm:power}. Throughout this sections, we denote with $C^k$ the $k$-th power of a Hamiltonian cycle with $n$ vertices.

\paragraph{Processing the reduced graph.} Let $0\ll \eps \ll \delta'\ll \alpha \ll 1/k$. Consider a graph $G$ on $n$ vertices with minimum degree $(1/(k+1)+\alpha)n$. By a standard application of Szemer\'edi's regularity lemma, we can find a partition of $V(G)$ into an exceptional vertex part $|V_0|\le \eps n$, and equal parts $V_1,\dots,V_m$, together with a subgraph $G'$ of $G$ with the property that the minimum degree of $G'$ is at least $(1/(k+1)+\alpha/2)n$, and for distinct $1 \le i,j \le m$, the pair $(V_i, V_j)$ either has density 0 in $G'$, or it is $\eps$-regular with density at least $\delta'$ (again, in $G'$). Furthermore $m=O_\eps(1)$ and $m$ is sufficiently large in $\alpha$. 

We denote $R$ the reduced graph on $[m]$, where $i$ and $j$ are connected if $G'$ is nonempty between $V_i$ and $V_j$. Observe that in the reduced graph, each vertex $i\in [m]$ is adjacent to at least $(1/(k+1)+\alpha/4)m$ other vertices. 

\begin{lem}\label{lem:star-part}
    In the reduced graph $R$, we can find vertex disjoint stars where each star contains at most $k$ leaves.
\end{lem}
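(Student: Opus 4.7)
The plan is to take an extremal vertex-disjoint collection $\mathcal{S}$ of stars in $R$, each with between $1$ and $k$ leaves, maximizing the number of covered vertices $|V(\mathcal{S})|$, and show via a local-modification argument that $V(\mathcal{S}) = V(R)$. Suppose for contradiction some $v \in V(R) \setminus V(\mathcal{S})$; by the minimum degree bound on $R$, $|N_R(v)| \ge (1/(k+1) + \alpha/4)m > m/(k+1)$.

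I would split into cases based on the status of the neighbors of $v$ in $\mathcal{S}$. If $v$ has an uncovered neighbor $u$, add the star $\{u,v\}$ to $\mathcal{S}$. If $v$ has a neighbor $u$ that is a center in $\mathcal{S}$ with fewer than $k$ leaves, attach $v$ as an extra leaf to that star. Otherwise, if $v$ has a neighbor $u$ that is a leaf in $\mathcal{S}$, say lying in a star centered at $c$ with $j$ total leaves, then either $j \ge 2$ (remove $u$ from $c$'s star, which is still valid with $j-1 \ge 1$ leaves, and form the new star $\{u,v\}$) or $j = 1$ (reroot the original two-vertex star $\{c, u\}$ at $u$ and extend it to $\{u, c, v\}$, using the existing edges $uc$ and $uv$). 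Each of these local moves strictly increases $|V(\mathcal{S})|$, contradicting maximality.

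If none of the modifications above applies, then every neighbor of $v$ must be a saturated center of $\mathcal{S}$ (i.e.\ a center with exactly $k$ leaves), and no neighbor of $v$ is a leaf. Then the $|N_R(v)| > m/(k+1)$ vertex-disjoint stars rooted at these neighbors together cover strictly more than $(k+1) \cdot m/(k+1) = m$ distinct vertices, all lying in $V(R) \setminus \{v\}$, which contradicts $|V(R)| = m$. The only real subtlety is the singleton-leaf subcase, where a naive removal of the leaf $u$ would leave its former center $c$ unpaired; the rerooting trick resolves this cleanly by re-identifying $u$ as the center of the slightly larger star $\{u, c, v\}$.
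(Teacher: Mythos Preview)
Your argument is correct. You take a star collection that maximizes the number of covered vertices and show by local moves that an uncovered vertex $v$ can always be absorbed unless every neighbor of $v$ is the center of a full $K_{1,k}$; then a simple counting contradiction finishes. The case analysis is complete, and the rerooting trick in the $j=1$ subcase is handled correctly (note this uses $k\ge 2$, which holds in the paper's setting $k\ge 3$).

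Your route is genuinely different from the paper's. The paper starts from a maximal matching $M$, so that the set $U$ of uncovered vertices is independent, and then argues that the vertices of $U$ can be attached to endpoints of matching edges so that no endpoint receives more than $k-1$ of them. This distribution step is carried out via a max-flow/min-cut argument: if no feasible assignment exists, a violating cut $(H',U')$ is produced, and the minimum-degree condition together with $|U'|+2|H'|\le m$ yields contradictory bounds on $|H'|$. Your approach replaces all of this with a direct extremal argument using four elementary local improvements. The paper's proof is a bit more structural (it explicitly builds from a matching and isolates where flow methods enter), while yours is shorter, avoids Hall-type/flow machinery entirely, and makes the role of the threshold $m/(k+1)$ more transparent in the final counting step.
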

\begin{proof}
    Consider a maximal collection $M$ of matching edges in the reduced graph. Let $U$ be the set of vertices which are not contained in the matching edges. Then $U$ must form an independent set in the reduced graph. Furthermore, for each edge in $M$, at least one endpoint must have degree at most $1$ into $U$, and if one endpoint has degree at least $2$ into $U$ then the other endpoint has degree $0$ into $U$. Let $U_1$ be the set of vertices in $U$ which are adjacent to an edge of $M$ for which both endpoints have degree $1$ into $U$. For each $u_1\in U_1$, we combine $u_1$ with an edge $e$ of $M$ for which $u_1$ is the unique neighbor in $U$ of the endpoints of $e$ to create a $K_{1,2}$. Note that each edge of $M$ for which both endpoints have degree $1$ into $U$ can be used for at most one vertex $u_1\in U_1$. %
    Let $\tilde{U} = U\setminus U_1$ and let $\tilde{M}$ denote the set of remaining unused edges of $M$. Consider a set $H$ which includes all vertices in $\tilde{M}$ with degree at least $1$ into $\tilde{U}$. Note that $H$ includes at most one vertex in each edge of $\tilde{M}$. Furthermore, note that each vertex in $\tilde{U}$ has degree $0$ into $M \setminus H$, and $\tilde{U}$ is an independent set, and hence the neighbors of every vertex in $\tilde{U}$ are contained in $H$.  %

    We claim that we can find a subgraph of the reduced graph between $H$ and $\tilde{U}$ such that the degree of each vertex in $H$ is at most $k-1$ and degree of every vertex in $\tilde{U}$ is $1$. The conclusion of the lemma readily follows from the existence of such subgraph. Suppose, for the sake of contradiction, that such subgraph does not exist. By an application of the max flow min cut theorem, inexistence of such subgraph implies that we can find $H'\subseteq H$ and $U'\subseteq \tilde{U}$ such that 
    	\begin{equation}\label{eq:cut}
    		(k-1)|H'| + e(H', \tilde {U}\setminus U') + e(H\setminus H', U') < |U'|.
	\end{equation}
    
    As observed before, each vertex $u\in U'$ has degree at least $(1/(k+1)+\alpha/4)m$ in $R$.  
    We have 
    \begin{equation}\label{eq:lower-edge}
        e(H\setminus H',U') \ge (1/(k+1)+\alpha/4)m|U'| - e(H', U') \ge (1/(k+1)+\alpha/4)m|U'| - |H'||U'|,
    \end{equation}
    since each vertex in $U'$ (and hence $\tilde{U}$) has minimum degree at least $(1/(k+1)+\alpha/4)m$ into $H$. %

    Furthermore, note that $m = |U| + 2|M| \ge |\tilde{U}| + 2|H| \ge |U'| + 2|H'|$. Hence, $|U'| \le m-2|H'|$, which together with (\ref{eq:cut}) implies that 
    \[
        (k+1)|H'| + e(H', \tilde {U}\setminus U') + e(H\setminus H', U') < m,
    \]
    and thus $|H'| < m/(k+1)$. On the other hand, (\ref{eq:lower-edge}) and (\ref{eq:cut}) imply that 
    \[
        (1/(k+1)+\alpha/4)m|U'| - |H'||U'| < |U'|, 
    \]
    and thus $|H'| > (1/(k+1)+\alpha/4)m-1 > m/(k+1)$, a contradiction. 
\end{proof}

By Lemma \ref{lem:star-part}, we have a vertex partition of $R$ into stars each having at most $k$ leaves. For each star $S$ isomorphic to $K_{1,k'}$ with $k' \in (1,k)$, let $S_0$ be the set of vertices of $G$ in the center of the star and $S_1, \dots, S_{k'}$ the  set of vertices of $G$ in its corresponding leaves. Consider a random partition of $S_i$ to $k-1$ parts of equal size. (To address divisibility issues, we may move $O(1)$ many vertices to the exceptional part $V_0$.) In particular, we have $\ell$ parts $P_{0,1},\dots,P_{0,k-1}$ inside $S_0$, and a total $k'\ell$ parts $P_{1},\dots,P_{k'(k-1)}$ which are subsets of $S_i$ for some $i\in [k']$. Note that with high probability, we have that the graph $G'$ is $2\eps$-regular with density at least $\delta'/2$ between $P_{0,j}$ and $P_{j'}$. For exactly $k'-1$ parts $P_{0,j}$ in the partition of $S_0$, we can assign to each of them an arbitrary disjoint set of $k$ parts among $P_1,\dots,P_{k'(k-1)}$. For the remaining unused parts among  $P_1,\dots,P_{k'(k-1)}$, we assign to each of them an arbitrary part among the unused $P_{0,j}$. By our choice, every part $P_{j}, P_{0,j}$ is used exactly once. 

After this modification, we have a collection of stars which either have exactly $1$ or $k$ leaves. Further observe that for each star with one leaf, we can partition the corresponding two vertex parts $S_0, S_1$ into $k+1$ smaller parts of equal size $S_{i,j}$ for $i=0,1$ and $j\in [k+1]$. We then assign to $S_{0,1}$ the parts $S_{1,1},\dots,S_{1,k}$, and assign to $S_{1,k+1}$ to $S_{0,2},\dots,S_{0,k+1}$. In particular, upon this modification, we can guarantee that all stars are isomorphic to $K_{1,k}$, have parts of equal size, and every pair of adjacent parts in a star is $2\eps$-regular with density at least $\delta'/2$. We denote now by $R$ the new reduced graph.

Denote by $\mathcal{S}$ the collection of obtained $K_{1,k}$ stars. For each star $S\in \mathcal{S}$, by moving some vertices with low degree into $V_0$ and then applying Lemma \ref{lem:super-regular}, we can guarantee that all $V_i$ for $i\ge 1$ have the same size and for $\{i,j\}$ an edge in $\mathcal{S}$, the graph $G'$ between $V_i$ and $V_j$ is $4\eps$-super-regular with density exactly $\delta'/4$. Furthermore, the final size of $V_0$ is at most $O(\eps n)$. 

Given a subset of vertices $X \subseteq V(R)$ in the reduced graph, let $\overline{X}$ denote the set of vertices in $V(G)$ contained in $V_x$ for $x\in X$. 

\paragraph{Spread embedding of a subgraph of $C^k$ in $G$.} Let $Z$ denote the set of vertices of $R$ which correspond to centers of the stars in $\mathcal{S}$, and $W = V(R)\setminus Z$ be the remaining vertices. Note that $|Z| = m/(k+1)$ and consequently $|\overline{Z}| \le n/(k+1)$. Hence, each vertex $v \in V_0$ has at least $\delta' n$ neighbors in $\overline{W}$. Moreover, $v$ has at least $\delta' |V_x|/2$ neighbors in at least $\delta' |W|/2$ many parts $V_x$ with $x\in W$. We assign each vertex $v \in V_0$ to a part $V_x$ with $x\in W$ where $v$ has at least $\delta' |V_x|/2$ neighbors in $V_x$, such that each part $V_x$ gets assigned at most $O(\eps |V_x| / \delta')$ vertices $v$. Let $A_x$ denote the vertices $v \in V_0$ that got assigned to $V_x$. For each star $S\in \mathcal{S}$, let $A_S = \bigcup_{x\in S}A_x$, $V_S = \bigcup_{x\in S} V_x$ and $Z_S = V_{z(S)}$ where $z(S)$ denotes the center of $S$. 

Next, describe a subgraph of $C^k$ that we embed in $G$. Towards this end, define the \emph{distance} between $i,j\in [n]$ as the smallest non-negative integer congruent to $j-i$ modulo $n$. Split $[n]$ into consecutive segments, one segment $I_S \subseteq [n]$ for each star $S \in \mathcal{S}$ such that $|I_S| = |V_S \cup A_S| = (1 \pm O(\eps))k N/m$. Fix an arbitrary labeling $\xi: [n] \to \{0,1\}$, which will serve as a `blueprint' for a subgraph of $C^k$, such that:
\begin{itemize}
     \item The set $\{i \in I_S \colon \xi(i) = 1\}$ is of size $|A_S \cup Z_S|$.
    \item Consecutive numbers with label $1$ are at distance at most $k$.
    \item Within distance $k$ of any number labelled $1$ there must be a number labelled $0$. 
    \item In each segment $I_S$, the first number has label $1$ and the last $k-1$ numbers have label $0$.   
\end{itemize} 
Having $\xi$ fixed, we say that a bijection $\phi : [n] \to V(G)$ is \emph{$\xi$-good} if the following holds:
\begin{itemize}
    \item $\phi(I_S) = A_S \cup V_S$. In other words, each star together with its associated vertices corresponds to the segment $I_S$.
    \item For $i \in I_S$, we have $\phi(i) \in A_S \cup Z_S$ if and only if $\xi(i) = 1$.
    \item If $\phi(i), \phi(j) \in V_0$, then $i$ and $j$ are at distance larger than $2k$.    
    
    \item If $i, j \in I_S$ are at distance at most $k$ and $\xi(i) = 0, \xi(j) = 1$, then $\phi(i)$ is adjacent to $\phi(j)$ in $G$.
\end{itemize} 
The subgraph of $C^k$ we aim to find in $G$ is implicitly given by such $\phi$ after identifying $V(C^k)$ with $[n]$ in a natural order.

\begin{lem}
    There exists a $O(1/n)$-vertex-spread distribution $\mu$ of $\xi$-good bijections $\phi:[n] \to V(G)$. %
\end{lem}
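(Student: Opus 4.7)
The plan is to apply Lemma \ref{lem:spread} independently to each star $S \in \mathcal{S}$, after first handling the exceptional vertices $A_S$ via a spread random pre-assignment to label-$1$ positions in $I_S$. Since the number of stars is $|\mathcal{S}| = O(1)$ and $N = \Theta(n)$, an $O(1/N)$-vertex-spread distribution on each star will combine to the desired $O(1/n)$-vertex-spread distribution $\mu$.

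\textbf{Spread placement of exceptional vertices.} For each star $S$, partition $I_S$ (with a buffer of $2k$ positions removed from each end) into consecutive blocks of length $Ck$ for a sufficiently large constant $C$. Sample a uniformly random $|A_S|$-subset of these blocks; within each selected block pick a uniformly random label-$1$ position; and assign $A_S$ to these positions via a uniformly random bijection. Since $|A_S| = O(\eps k N/\delta')$ is much smaller than the number of blocks $\Theta(N/(Ck))$, the selection is well-defined. Any two exceptional positions land at distance at least $Ck > 2k$, even across segment boundaries (thanks to the buffers). A routine calculation shows this scheme is an $O(1/N)$-vertex-spread distribution over injections $A_S \hookrightarrow I_S$.

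\textbf{Applying Lemma \ref{lem:spread} per star.} Conditional on the exceptional pre-assignment, for each star $S$ we set up the input to Lemma \ref{lem:spread} as follows:
\begin{itemize}
    \item The reduced graph is $K_{1,k}$ with center $z(S)$ and leaves $v_1, \dots, v_k$; the corresponding parts $V_{z(S)}, V_{v_1}, \dots, V_{v_k}$ form $4\eps$-super-regular pairs of density $\delta'/4$.
    \item The graph to embed, $H_S$, has vertex set $I_S$ minus the exceptional positions, with edges between label-$0$ and label-$1$ positions at distance at most $k$ in $I_S$; its maximum degree is at most $2k$.
    \item The homomorphism $h$ sends each non-exceptional label-$1$ position to $z(S)$, each constrained label-$0$ position (one at distance $\le k$ of an exceptional position holding a vertex $v \in A_{v_j}$) to $v_j$, and distributes the remaining label-$0$ positions evenly among the leaves so that $|h^{-1}(v_j)| = N$ for every leaf $v_j$.
    \item $W$ is the set of constrained label-$0$ positions; for $p' \in W$ constrained around $v \in A_{v_j}$, set $W_{p'} = N_G(v) \cap V_{v_j}$, which has size at least $\delta' N/2$.
\end{itemize}
The distance-$>2k$ spacing between exceptional positions ensures every constrained $p'$ is affected by a unique exceptional vertex, so $W_{p'}$ is well-defined. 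With $\alpha := \delta'/2$ and the lemma's $\beta$ absorbing $|W| = O(k^2 \eps N/\delta')$, the hypotheses of Lemma \ref{lem:spread} are satisfied and we obtain an $O(1/N)$-vertex-spread distribution $\lambda_S$ over embeddings $\phi_S : H_S \hookrightarrow G$ respecting the constraints.

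\textbf{Combining and verifying $\xi$-goodness.} Combine independent samples from $\{\lambda_S\}_{S \in \mathcal{S}}$ with the exceptional placements to produce a random bijection $\phi : [n] \to V(G)$. The $\xi$-goodness conditions then follow directly: $\phi(I_S) = A_S \cup V_S$ and the label-$1 \leftrightarrow (A_S \cup Z_S)$ correspondence hold by construction; the distance-$>2k$ spread of $\phi^{-1}(V_0)$ holds by the block buffering in Step 1; and the within-segment adjacency between a label-$0$ position $p'$ and a label-$1$ position $p$ at distance $\le k$ holds either because $(p',p)$ is an edge of $H_S$ (which Lemma \ref{lem:spread} embeds into $G$) or because $p$ holds an exceptional vertex $v$ and $p' \in W$ has been forced into $N_G(v)$. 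The main technical point is balancing spreadness of the exceptional placement with the $>2k$ spacing requirement; the block-based sampling resolves both simultaneously.
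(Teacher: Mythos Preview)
Your proposal is correct and follows essentially the same approach as the paper: first randomly place the exceptional vertices $A_S$ at well-spaced label-$1$ positions (you use block sampling, the paper samples uniformly subject to a $2k$-distance constraint), then apply the spread blow-up lemma with restriction sets $W_x$ enforcing adjacency to the already-placed exceptional vertices, and finally combine the two stages to get the overall $O(1/n)$-vertex-spread. Your per-star application of Lemma~\ref{lem:spread} is equivalent to the paper's single global application since the auxiliary graph $H$ decomposes over the stars, and your explicit $2k$-buffer at segment ends cleanly handles the cross-segment spacing of $\phi^{-1}(V_0)$.
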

\begin{proof}
    We define a random $\xi$-good bijection $\phi$ as follows. For each $S \in \mathcal{S}$, choose a subset $A'_S \subseteq \{i \in I_S \colon \xi(i) = 1\}$ of size $|A_S|$ uniformly at random under the constraint that no two elements in $A'_S$ are at distance closer than $2k$. Choose a bijection from $A_S'$ into $A_S$, again uniformly at random. This defines $\phi$ for all $i \in A'_S$.

    Conditional on the choices above, define the graph $H$ on the vertex set $M = [n] \setminus \bigcup_{S \in \mathcal{S}} A'_S$, such that there is an edge between $i, j \in I_S \cap M$ iff they are at distance at most $k$, $\xi(i) = 0$ and $\xi(j) = 1$. Fix an arbitrary homomorphism $h: H \to R$ such that the following holds:
    \begin{itemize}
        \item $h(I_S \cap M) = S$ for every $S \in \mathcal{S}$, 
        \item $h(i) \in z(S)$ if and only if $i \in I_S \cap M$ and $\xi(i) = 1$, and
        \item if $i \in I_S \cap M$ and $j \in A'_S$ are at distance at most $k$ and $\xi(i) = 0$, then $h(i) = x$ where $x \in S$ is such that $\phi(j)$ is assigned to $A_x$.
    \end{itemize}
    We also require $|h^{-1}(x)| = V_x$ for every $x \in R$, which is possible due to the fact that every star in $\mathcal{S}$ has $k$ leaves (in fact, it suffices that every star has at most $k$ leaves).
    
    Apply Lemma \ref{lem:spread} with $H$ and $h$ as described, with the additional requirement that vertices $i \in I_S \cap M$ with $\xi(i) = 0$ which are within distance $k$ of a vertex $j \in A_S'$ are constrained to lie in the neighborhood of $\phi(j)$. This is indeed possible due to the choice of the homomorphism $h$. Lemma \ref{lem:spread} produces a $O(1/n)$-vertex-spread distribution over embeddings of $H$. Sample one such embedding according to this distribution to obtain a $\xi$-good embedding $\phi$.
    
    We let $\mu$ denote the resulting distribution over obtained $\xi$-good embeddings $\phi$. It remains to verify that this defines an (unconditional) distribution over good embeddings $\phi:[n] \to V(G)$ which is $O(1/n)$-vertex-spread. Consider sequences of vertices $v_1,\dots,v_r \in V(G)$ and $u_1,\dots,u_r\in[n]$. By construction, for vertices $v_{i_1},\dots,v_{i_a }\in V_0$, the probability that $\phi(u_{i_j})=v_{i_j}$ for all $j\le a$ is at most $(C_{k,m}/n)^a$. Given  the choice of $A' = \bigcup_{S \in \mathcal{S}} A_S'$ and $\phi(v)$ for all $v \in A'$, the $O(1/n)$-vertex-spread produced by Lemma 2 satisfies that the probability that $\phi(u_i)=v_i$ for $i \notin \{i_1,\dots,i_a\}$ is at most $(O(1)/n)^{r-a}$. As such, 
    \[
        \Pr\left[ \bigwedge_{i \in [r]} \phi(u_i)=v_i \right] \le (O(1)/n)^{r}, 
    \]
    and hence $\mu$ is $O(1/n)$-vertex-spread. 
\end{proof}

\paragraph{Completing $C^k$ using random edges.} Given a $\xi$-good embedding $\phi$, let $\overline{H}_\phi$ denote the subgraph of $K_n$ consisting of the edges between vertices in $A_S \cup Z_S$ and $V_S \setminus Z_S$ that are images of some $i, j \in I_S$ within distance $k$. Identifying $V(C^k)$ with $[n]$, let $H_\phi$ denote the subgraph of $K_n$ with edge set $E(C^k)\setminus \overline{H}_\phi$. That is, $H_\phi$ contains an edge $\{v,w\}$ if pre-images of $v$ and $w$ are within distance $k$ and $\{v,w\}$ is not an edge in $\overline{H}_\phi$. 

The distribution $\mu$ on $\xi$-good embeddings $\phi$ induces a distribution $\lambda$ on $H_\phi$. Using (vertex) spreadness of $\mu$, we would like to use the Kahn-Kalai conjecture \cite{park24kahnkalai} or its weaker fractional version \cite{frankston21fractional} to deduce Theorem \ref{thm:power}. 
In order to avoid losing the logarithmic factor in applying the Kahn-Kalai conjecture, we will make use of a result of Spiro \cite{spiro2023smoother}. For a finite set $X$, we denote $X_q$ a random subset of $X$ where each element is retained independently with probability $q$. %
\begin{thm}\label{thm:kk-nolog}
    There exists a constant $C>0$ such that the following holds. Suppose that there is a probability distribution $\zeta$ on $\mc{H} \subseteq 2^X$ satisfying that for integers $r_1>r_2>\dots>r_\ell>r_{\ell+1}=1$, any $i\in [\ell]$, $t \in \mathbb{N}$, and $A\subseteq X$ with $r_{i} \ge |A| \ge t\ge r_{i+1}$, 
    \[ \zeta ( \{H\in \mc{H}: |H\cap A| = t\} ) \le q^t. \]
    Then, $X_{C\ell q}$ contains some $H\in \mc{H}$ with probability at least $1/2$.  
\end{thm}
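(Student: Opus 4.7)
The plan is to adapt the Park--Pham / Frankston--Kahn--Narayanan--Park fragmentation proof of the Kahn--Kalai conjecture to the stratified setting. The classical argument exposes a random set in $\log_2 |X|$ dyadic stages and loses a factor at each scale, producing the $\log |X|$ overhead. Here the hypothesis posits a spreadness condition at exactly $\ell$ prescribed scales $r_1 > \cdots > r_{\ell+1} = 1$, so the natural goal is to run the fragmentation in $\ell$ stages and lose only one factor per stage, yielding the threshold $C \ell q$ in place of $C q \log |X|$. Concretely, set $p = C \ell q$ and couple $X_p$ as $W_1 \cup \cdots \cup W_\ell$, where each $W_j$ is an independent copy of $X_{C' q}$ for a suitable absolute constant $C'$. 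Sample $H \sim \zeta$ independently of the $W_j$.

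I would track a decreasing sequence of ``residual'' witness sets $F_0 \supseteq F_1 \supseteq \cdots \supseteq F_\ell$, where $F_i$ records the subset of $H$ that has not yet been forced into $W_1 \cup \cdots \cup W_i$ by the fragmentation. The inductive claim is that $|F_i| < r_{i+1}$ holds with probability $1 - O(1/\ell)$ at each stage; at $i = \ell$, using $r_{\ell+1} = 1$, this forces $F_\ell = \emptyset$, so $H \subseteq X_p$ and the theorem follows. For the transition from stage $i-1$ to stage $i$, one bounds the bad event $\{|F_i| \ge r_{i+1}\}$ by a union bound over witness pairs $(A, T)$ with $A$ of size $a \in [r_{i+1}, r_i]$ (in the $i$-th stratum) and $T \subseteq A$ of size $t \in [r_{i+1}, a]$. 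The contribution of each such pair factors into the spreadness bound $\zeta(\{H : |H \cap A| = a\}) \le q^a$ (available from the hypothesis at scale $i$), a combinatorial count $\binom{a}{t}$, and a binomial factor $(C' q)^{a - t}$ for $W_i$ to realize the chosen intersection. With $C'$ sufficiently large, the sums within each stratum contribute at most $O(1/\ell)$, so the total failure probability is at most $\ell \cdot O(1/\ell) \le 1/2$.

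The main obstacle is the precise decoupling of the randomness of $H$ from that of $W_1, \ldots, W_{i-1}$ at each stage, together with the correct definition of the witness set $F_i$ so that the equality-event hypothesis $\zeta(\{H : |H \cap A| = t\}) \le q^t$ (rather than a mere containment event) applies cleanly. This is essentially the encoding/recovery argument that underlies the classical Park--Pham proof, but implemented stratum-by-stratum: the smoother equality-event hypothesis ensures that within each stratum the relevant sums collapse to a constant contribution, and it is exactly this collapse that removes the logarithmic factor paid by the classical argument.
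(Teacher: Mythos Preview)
The paper does not prove this theorem. It is quoted verbatim as a result of Spiro, with the remark that ``a similar result can be found in \cite{diaz2023spanning}; a streamlined proof following the proof of the Kahn--Kalai conjecture can also be found in \cite{frieze2015introduction}.'' There is therefore no in-paper argument to compare against.

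That said, your plan is precisely the strategy behind those cited proofs: run the Park--Pham fragmentation in $\ell$ rounds rather than $\log_2 |X|$ rounds, with round $i$ responsible for shrinking the residual from size at most $r_i$ down to size below $r_{i+1}$, and use the exact-intersection hypothesis at scale $i$ to make the union bound within that stratum sum to $O(1)$ rather than accruing a logarithmic number of constant losses. Your identification of the main technical point --- the encoding/recovery step that lets you invoke the hypothesis on a set $A$ determined only by $H$ and $W_1,\ldots,W_{i-1}$, so that the spread bound applies cleanly --- is accurate and is exactly what the referenced proofs handle. One small wrinkle your sketch elides: the hypothesis only constrains sets $A$ with $|A|\le r_1$, so the argument needs either an implicit ``stage $0$'' (or an a priori reason, present in the application, that the relevant residuals start below $r_1$). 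In the paper's application this is harmless because the effective spread condition of Lemma~\ref{lem:check-spread-H} already covers the top scale, but it is worth making explicit if you write the proof out in full.
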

A similar result can be found in \cite{diaz2023spanning}; a streamlined proof following the proof of the Kahn-Kalai conjecture can also be found in \cite{frieze2015introduction}. 

We verify that the distribution $\lambda$ over $H_\phi$ satisfies the required property. 
\begin{lem}\label{lem:check-spread-H}
    For an appropriate constant $\gamma>0$, the distribution $\lambda$ over graphs $H_\phi$ satisfies the following. For a fixed $\xi$-good mapping $\phi'$, a subgraph $H' \subseteq H_{\phi'}$ consisting of $h$ edges, and $t \ge hn^{-\gamma}$, we have 
    \[
        \lambda(\{H_{\phi}: |E(H') \cap E(H_\phi)| = t \}) \le q^t
    \]
    for $q = O(n^{-1/(k-1)})$. 
\end{lem}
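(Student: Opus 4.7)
The plan is to pass to a question about a random permutation of $[n]$ and perform a union bound leveraging the vertex-spread property together with the structure of $H^*$. Set $T := (\phi')^{-1}(H')$, a fixed $h$-edge subgraph of $H^*$ on $[n]$, and $\pi := \phi^{-1} \circ \phi'$. Since $\mu$ is $O(1/n)$-vertex-spread and $\phi'$ is fixed, $\pi$ inherits the property that for distinct $x_1,\dots,x_r \in [n]$ and $y_1,\dots,y_r \in [n]$ we have $\Pr[\pi(x_i) = y_i \text{ for all } i] \le (C/n)^r$. Moreover, $|E(H') \cap E(H_\phi)| = |\{e \in E(T) : \pi(e) \in E(H^*)\}|$, so it suffices to bound the distribution of the right-hand side.

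Next, I would union bound over $t$-edge subsets $S \subseteq T$: for each $S$ with $s = |V(S)|$ vertices and $c = c(S)$ components, vertex-spread together with an injective-homomorphism count via BFS from a root in each component (using that $H^* \subseteq C^k$ has maximum degree at most $2k$) yields
\[
\Pr[\pi(S) \subseteq E(H^*)] \le n^c (2k)^{s-c}(C/n)^s = O_k(1)^s\, n^{-(s-c)}.
\]
The key structural observation is that $H^*$ has maximum degree $2(k-1)$ in the bulk of each segment: the label-$1$ positions of $\xi$, spaced at distance at most $k$, ensure that each bulk label-$0$ vertex loses exactly two label-$1$ neighbors within its segment when passing from $C^k$ to $H^*$. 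Hence every $t$-edge subgraph $S \subseteq H^*$ satisfies $s(S) \ge t/(k-1)$, with the extremal examples being $2(k-1)$-regular clusters of consecutive label-$0$ vertices within a single segment.

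Combining, I would use $\binom{h}{t} \le (eh/t)^t \le (en^\gamma)^t$ from the hypothesis $h \le tn^\gamma$. For sparse subsets $S$ (near-forests with $s - c$ close to $t$) the contribution is $(O_k(1)\, h/(tn))^t$, which is below $q^t$ provided $\gamma$ is strictly less than $(k-2)/(k-1)$. For the extremal dense subsets saturating $s \ge t/(k-1)$, one uses that such subgraphs of $H^*$ are highly restricted in number (essentially one shape per segment) to bound their combined contribution within $q^t$. Choosing $\gamma$ strictly below $(k-2)/(k-1)$ and the implicit constant in $q = O(n^{-1/(k-1)})$ large enough finishes the argument. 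The main obstacle is the balance of the dense-case contribution, since the density inequality $s \ge t/(k-1)$ is saturated by the extremal subsets without slack; carrying this step out requires careful bookkeeping of dense subgraphs of $H^*$, exploiting the restrictions imposed by the labeling $\xi$ and the segment structure to ensure the extremal contribution stays within $q^t$ uniformly over all $t \ge hn^{-\gamma}$.
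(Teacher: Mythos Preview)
Your setup and the per-subset probability bound via vertex-spread plus a rooted search in each component are exactly what the paper does. The gap is in the dense case, and your proposed fix is not the one that works.

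First, the counting: bounding the number of $t$-edge subsets by $\binom{h}{t}$ is too coarse. The paper stratifies by the pair $(v,c)=(v(S),c(S))$ and uses that $H'\subseteq C^k$ has bounded degree, so there are at most $\binom{h}{c}\,O_k(1)^{v-c}$ such $S$. This replaces the factor $(eh/t)^t$ by $(eh/c)^c\,O_k(1)^v$, which is what you actually need to match against $n^{-(v-c)}$.

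Second, and more importantly, your density inequality $v\ge t/(k-1)$ is not enough: with only that, the contribution from $c$ components carries an uncontrolled factor $(ehn/c)^c$, and since $h$ can be of order $n$ this blows up. The paper's key observation is a \emph{per-component} slack: every connected $L\subseteq H_{\phi'}$ of moderate size satisfies
\[
|E(L)| \le (k-1)\,v(L) - (1+\gamma)(k-1),
\]
because the first and last few vertices of $L$ (in the cyclic order) necessarily have degree strictly below $2(k-1)$. Summing over components gives $v-c \ge t/(k-1) + \gamma c$, and the extra $n^{-\gamma c}$ precisely kills $(eh/c)^c$ once you invoke the hypothesis $h\le t n^{\gamma}$. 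In other words, the inequality $v\ge t/(k-1)$ is never saturated for finite components; the fix is not that extremal configurations are ``rare'' but that they do not exist, with a quantifiable boundary deficit per component. Your sketch does not contain this idea, and without it the argument does not close.
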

\begin{proof}
    Consider a subgraph $T$ of $H'$ consisting of $t$ edges. Let $\phi$ be so that $H_\phi \cap H' = T$. Let $c(T)$ be the number of connected components of $T$, and $v(T)$ the number of vertices with degree at least $1$ in $T$. For each connected component $L$ of $T$, %
    consider a vertex $v_L$ of $L$.  
    
    Note that an edge in $H_{\phi}$ can only be between a vertex $\phi(u)$ and $\phi(v)$ where $v$ is within distance $k$ of $u$ and $\phi(u),\phi(v) \not \in V_0$, or $u$ and $v$ correspond to different stars in $\mathcal{S}$. Consider a DFS ordering $v_1,\dots,v_\ell$ on each component $L$ of $T$. In particular, for any $j\le \ell$, there is $i<j$ such that $v_j,v_i$ are adjacent in $T$. Given $u_i$ such that $\phi(u_i)=v_i$ for $i<j\le \ell$, there are at most $2k$ choices for the vertex $u_j$ such that $\phi(u_j)=v_j$. Hence, by $O(1/n)$-vertex-spreadness of $\phi$, 
    \[
        \lambda(\{H_{\phi}: H' \cap H_\phi = T \}) \le n^{c(T)} C^{v(T)-c(T)} (C/n)^{v(T)} = (C^2/n)^{v(T)}(n/C)^{c(T)} 
    \]
    for an appropriate constant $C$. Hence,
    \begin{equation}
        \lambda(\{H_{\phi}: |E(H') \cap E(H_\phi)| = t \}) \le \sum_{T\subseteq H': |E(T)|=t} (C^2/n)^{v(T)}(n/C)^{c(T)}.
    \end{equation}

    \begin{claim}\label{claim:count-edge}
        There is $\gamma>0$ such that the following holds. Suppose $T \subseteq H_{\phi'}$ is a connected subgraph with $v(T)\le n/(2km)$ vertices, where $m$ is the number of vertices in the reduced graph $R$. Then $T$ has at most $v(T) (k-1)-(1+\gamma)(k-1)$ edges. 
    \end{claim}
    \begin{proof}
        Again recall that an edge in $H_{\phi'}$ can only be between a vertex $\phi(u)$ and $\phi(v)$ where $v$ is within distance $k$ of $u$ and $\phi(u),\phi(v) \not \in V_0$, or $u$ and $v$ are within distance $k$ and they correspond to different stars in $\mathcal{S}$.
        
        Since $v(T) \le n/(2km)$, the vertex set of $T$ is contained in a segment of $[n]$ of length at most $kv(T) \le n/(2m)$. In particular, $T$ may intersect at most two different segments $I_S$. 
        
        Note that the $i$-th first or last vertex of $T$, for $i \le k$, has degree at most $k-1+i-1$ to vertices in $W$. The remaining vertices in $W$ have degree at most $2(k-1)$ to other vertices in $W$. Vertices in $Z^+ = \bigcup_{S \in \mathcal{S}} Z_S \cup V_0$ have degree at most $2(k-1)$ to other vertices in $Z^+$. Finally, since $T$ intersects at most two different $S$ blocks, there is at most one vertex in $Z^+$ with nonzero degree to vertices in $W$, in which case it has degree at most $k$. Altogether, the number of edges in $T$ is at most 
        \[
            \frac{1}{2}\left(2k + (v(T)-1) 2(k-1) - 2\sum_{1\le i\le \min(v(T)/2, k)} (k-i)\right),
        \]
        which we can verify to be bounded above by $v(T)(k-1) - (1+\gamma)(k-1)$ for a constant $\gamma>0$ for all $1\le v(T) \le n/(2km)$, assuming $k\ge 3$.
    \end{proof}

    Applying Claim \ref{claim:count-edge}, for each subgraph $T \subseteq H'$ with $c(T)$ components, we have $|E(T)| \le (k-1)v(T) - (1+\gamma)(k-1)c(T)$. Thus, noting that the number of subgraphs $T$ of $H'$ with $c$ components and $v$ vertices is at most $\binom{h}{c} C^{v - c} < (e h / c)^c C^v$, we have
    \begin{align*}
        \lambda(\{H_{\phi}: |E(H')\cap E(H_\phi)| = t \}) &\le \sum_{T\subseteq H': |E(T)|=t} (C^2/n)^{v(T)}(n/C)^{c(T)}\\
        &\le \sum_{1\le c\le t}  \left(\frac{ehn}{Cc}\right)^{c} (C^3/n)^{t/(k-1)+(1+\gamma)c}\\
        &\le \sum_{1\le c\le t} (C^3/n)^{t/(k-1)} \left(\frac{ehC^{2+3\gamma}n^{-\gamma}}{c}\right)^{c}\\
        &\le (C'/n)^{t/(k-1)}. \qedhere
    \end{align*}
\end{proof}

We are now ready to prove Theorem \ref{thm:power}. 
\begin{proof}[Proof of Theorem \ref{thm:power}]
    From Lemma \ref{lem:check-spread-H}, it is immediate that $\lambda$ satisfies the required property in Theorem \ref{thm:kk-nolog} with $\ell = 1/\gamma$ and $r_i = n^{1-i\gamma}$. 

    By Theorem \ref{thm:kk-nolog}, for $p=Cn^{-1/(k-1)}$ for an appropriate constant $C>0$, with probability at least $1/2$, $G(n,p)$ contains $H_\phi$ for some good embedding $\phi:V(C^k)\to V(G)$, in which case $G\cup G(n,p)$ contains a $k$th power of a Hamiltonian cycle. 
\end{proof}

\bibliographystyle{abbrv}
\bibliography{references}

\begin{thebibliography}{10}

\bibitem{ABCDJMRS22}
P.~Allen, J.~Böttcher, J.~Corsten, E.~Davies, M.~Jenssen, P.~Morris,
  B.~Roberts, and J.~Skokan.
\newblock A robust {C}orr\'adi–{H}ajnal theorem.
\newblock {\em Random Structures \& Algorithms}, 65(1):61--130, 2024.

\bibitem{bohman2003many}
T.~Bohman, A.~Frieze, and R.~Martin.
\newblock How many random edges make a dense graph hamiltonian?
\newblock {\em Random Structures \& Algorithms}, 22(1):33--42, 2003.

\bibitem{bottcher09bandwidth}
J.~B{\"o}ttcher, M.~Schacht, and A.~Taraz.
\newblock Proof of the bandwidth conjecture of {Bollob{\'a}s} and {Koml{\'o}s}.
\newblock {\em Math. Ann.}, 343(1):175--205, 2009.

\bibitem{bottcher24square}
J.~Böttcher, O.~Parczyk, A.~Sgueglia, and J.~Skokan.
\newblock The square of a hamilton cycle in randomly perturbed graphs.
\newblock {\em Random Structures \& Algorithms}, 65(2):342--386, 2024.

\bibitem{chung89quasirandom}
F.~R.~K. Chung, R.~L. Graham, and R.~M. Wilson.
\newblock Quasi-random graphs.
\newblock {\em Combinatorica}, 9(4):345--362, 1989.

\bibitem{diaz2023spanning}
A.~E. D{\'\i}az and Y.~Person.
\newblock Spanning-cycles in random graphs.
\newblock {\em Combinatorics, Probability and Computing}, 32(5):833--850, 2023.

\bibitem{frankston21fractional}
K.~Frankston, J.~Kahn, B.~Narayanan, and J.~Park.
\newblock Thresholds versus fractional expectation-thresholds.
\newblock {\em Ann. Math. (2)}, 194(2):475--495, 2021.

\bibitem{frieze2015introduction}
A.~Frieze and M.~Karo{\'n}ski.
\newblock {\em Introduction to random graphs}.
\newblock Cambridge University Press, 2015.

\bibitem{gowers06quasirandom}
W.~T. Gowers.
\newblock Quasirandomness, counting and regularity for 3-uniform hypergraphs.
\newblock {\em Comb. Probab. Comput.}, 15(1-2):143--184, 2006.

\bibitem{komlos97originalblow}
J.~Koml{\'o}s, G.~N. S{\'a}rk{\"o}zy, and E.~Szemer{\'e}di.
\newblock Blow-up lemma.
\newblock {\em Combinatorica}, 17(1):109--123, 1997.

\bibitem{KSS}
J.~Koml{\'o}s, G.~N. S{\'a}rk{\"o}zy, and E.~Szemer{\'e}di.
\newblock An algorithmic version of the blow-up lemma.
\newblock {\em Random Struct. Algorithms}, 12(3):297--312, 1998.

\bibitem{komlos98seymour}
J.~Koml{\'o}s, G.~N. S{\'a}rk{\"o}zy, and E.~Szemer{\'e}di.
\newblock Proof of the {Seymour} conjecture for large graphs.
\newblock {\em Ann. Comb.}, 2(1):43--60, 1998.

\bibitem{komlos01alonyuster}
J.~Koml{\'o}s, G.~N. S{\'a}rk{\"o}zy, and E.~Szemer{\'e}di.
\newblock Proof of the {Alon}-{Yuster} conjecture.
\newblock {\em Discrete Math.}, 235(1-3):255--269, 2001.

\bibitem{komlos96survey}
J.~Koml{\'o}s and M.~Simonovits.
\newblock Szemer{\'e}di's regularity lemma and its applications in graph
  theory.
\newblock In {\em Combinatorics, Paul Erd\H{o}s is eighty. Vol. 2}, pages
  295--352. Budapest: J{\'a}nos Bolyai Mathematical Society, 1996.

\bibitem{park24kahnkalai}
J.~Park and H.~T. Pham.
\newblock A proof of the {Kahn}-{Kalai} conjecture.
\newblock {\em J. Am. Math. Soc.}, 37(1):235--243, 2024.

\bibitem{PSSS}
H.~T. Pham, A.~Sah, M.~Sawhney, and M.~Simkin.
\newblock A toolkit for robust thresholds, 2023.
\newblock preprint, arXiv:2210.03064.

\bibitem{spiro2023smoother}
S.~Spiro.
\newblock A smoother notion of spread hypergraphs.
\newblock {\em Combinatorics, Probability and Computing}, 32(5):809--818, 2023.

\bibitem{thomason87pseudor}
A.~Thomason.
\newblock Pseudo-random graphs.
\newblock Random graphs '85, {Lect}. 2nd {Int}. {Semin}., {Pozna{\'n}}/{Pol}.
  1985, {Ann}. {Discrete} {Math}. 33, 307-331 (1987)., 1987.

\end{thebibliography}
\end{document}